\documentclass[11pt]{amsart}
\usepackage{amsmath,amssymb,latexsym,dsfont}
\usepackage[left=2.6cm,right=2.6cm,top=2.7cm,bottom=2.7cm]{geometry}

    \usepackage{amsmath,amsfonts,amssymb,epsfig}
     \usepackage{graphicx}
     \usepackage{hyperref}
     \usepackage{cases}
     \usepackage{color}

\newtheorem{theorem}{Theorem}[section]
\newtheorem{lemma}{Lemma}[section]

\newtheorem{proposition}{Proposition}[section]
\newtheorem{corollary}{Corollary}[section]
\newtheorem{remark}{Remark}[section]

\newtheorem{definition}{Definition}[section]
\setcounter{equation}{0}
\numberwithin{equation}{section}

  \newcommand{\beq}{\begin{equation}}
\newcommand{\eeq}[1]{\label{#1}\end{equation}}

      \newcommand{\R}{{\mathbb{R}}}

      \newcommand{\curl}{\operatorname{curl}}
      \newcommand{\dive}{\operatorname{div}}

      \newcommand{\loc}{\operatorname{loc}}
      
      \newcommand{\eps}{\varepsilon}
      \newcommand{\mR}{\mathbb{R}}
      \newcommand{\mS}{\mathbb{S}}

           \newcommand{\mup}{\mu^+}
           \newcommand{\mun}{\mu^-}

           \newcommand{\ep}{\eps^+}
           \newcommand{\en}{\eps^-}

      \newcommand{\bE}{\operatorname{\textbf{E}}}
      \newcommand{\bH}{\operatorname{\textbf{H}}}

     \newcommand{\supp}{\mbox{supp }}

      \makeatletter
      \def\@setcopyright{}
      \def\serieslogo@{}
      \makeatother


%
%

%
%
%
%
%


\newcommand{\cP}{{\mathcal P}}
\newcommand{\cF}{{\mathcal F}}
\newcommand{\cG}{{\mathcal G}}

\newcommand{\cH}{{\mathcal H}}
\newcommand{\cE}{{\mathcal E}}

\newcommand{\hH}{{\hat H}}
\newcommand{\hE}{{\hat E}}
\newcommand{\hJ}{{\hat J}}

\newcommand{\tbH}{\widetilde{\bf H}}
\newcommand{\tbE}{\widetilde{\bf E}}

\newcommand{\tH}{\widetilde H}
\newcommand{\tE}{\widetilde E}
\newcommand{\tJ}{\widetilde J}

\newcommand{\hy}{\hat y}
\newcommand{\hj}{\hat j}

\newcommand{\teps}{\widetilde \eps}
\newcommand{\tmu}{\widetilde \mu}

\newcommand{\sign}{\mbox{sign}}





\begin{document}

   \author[H.-M. Nguyen]{Hoai-Minh Nguyen}

\address[H.-M. Nguyen]{Department of Mathematics, EPFL SB CAMA, Station 8,  \newline\indent
	 CH-1015 Lausanne, Switzerland.}
\email{hoai-minh.nguyen@epfl.ch}


\title[The invisibility via anomalous localized resonance]{The invisibility  via anomalous localized resonance of   a source for electromagnetic waves}

   
\begin{abstract}
We study the invisibility via anomalous localized resonance of a general source for electromagnetic waves in the setting of doubly complementary media. As a result,  we show that cloaking is achieved if the power is blown up. We also reveal  a critical length for  the invisibility of a source that occurs when the plasmonic structure is complementary to an annulus of constant, isotropic  medium.  
\end{abstract}

\date{}

\maketitle



\section{Introduction}

Metamaterials are smart materials engineered to have properties that have 
not yet been found in nature. Their interesting applications as well as the challenges in understanding their fascinating  properties have gained  a lot of attention from the scientific community in recent years. 
One important class of metamaterials consists of negative-index metamaterials, characterized by a refractive index with a negative value over some frequency range. These metamaterials were postulated and studied 
by Veselago \cite{Veselago} in the sixties,  and their existence  was confirmed by Shelby, Smith, and Schultz \cite{ShelbySmithSchultz} in 2001. New fabrication techniques, whose principles are based on the theory of homogenization (see, e.g., \cite{BBF} and references therein),  now allow for  the construction of negative-index metamaterials at scales that are interesting for applications.

One of interesting applications of negative-index metamaterials is cloaking and there are several techniques used for this purpose. The first one involves the concept of complementary media which uses an anti-object to cancel the effect of light of the cloaked object.  
This was suggested by Lai et al. \cite{LCZT} and was mathematically established for related schemes in  both acoustic and electromagnetic waves in the time-harmonic regime \cite{Ng-Negative-Cloaking, MinhLoc2, Ng-Negative-Cloaking-Maxwell}. Another technique for cloaking an object using negative-index metematerials is via anomalous localized resonance, which was proposed in  \cite{Ng-CALR-O} with its roots in \cite{NicoroviciMcPhedranMilton94, MiltonNicorovici, Ng-CALR}. In  this  cloaking method, the cloak is independent of the object. There is also a  technique to make an object smaller using  plasmonic structures, as introduced by Alu and Engheta \cite{AluEngheta}. 


This paper is on the invisibility of  a source  via anomalous localized resonance.  This  was discoreved by Milton and Nicorovici \cite{MiltonNicorovici} (see also \cite{NicoroviciMcPhedranMilton94, NMMP1}) for constant radial-shell plasmonic structures in the two-dimensional quasistatic regime.  They showed that  a dipole source is  invisible  if the distance from it to the (negative-index)  shell plasmonic structure is less than a critical value; otherwise, it is visible. A key character of this cloaking technique is that the cloaking phenomenon is relative: the invisibility takes place when the source is normalized so that the power is  bounded. It is worth noting that  for both  cloaking using complementary media and cloaking an object via anomalous localized resonance, cloaking happens for a (fixed) source away from the plasmonic structure. Milton and Nicorovici's work was later developed by various authors \cite{BouchitteSchweizer10,AmmariCiraoloKangLeeMilton, AmmariCiraoloKangLeeMilton2, KLO, KohnLu, Ng-CALR-CRAS, Ng-CALR, Ng-CALR-frequency, Ross}; see also the references therein. 
In  the quasistatic acoustic regime, a  general setting  for this type of cloaking 
was studied  in  \cite{Ng-CALR}. We introduced there the concept of doubly complementary media for a general shell,  which roughly states that the plasmonic structure is complementary to a part of the core and a part of the exterior of the core-shell structure. The invisibility via anomalous localized resonance of a general 
source  for this structure was investigated there. In particular, we showed that the invisibility occurs when the power of the plasmonic structures is blown up. These results  were later extended for the finite frequency acoustic regime in \cite{Ng-CALR-frequency}. 
It is worth noting that the character of the resonance associated with negative-index metamaterials is quite complex; two different types of resonance, localized and complete,  can occur in very similar settings \cite{MinhLoc1}. 

Though the invisibility via anomalous localized resonance of  a source has been extensively investigated for the acoustic waves,  this problem has not yet been sufficiently developed in the electromagnetic setting. The goal of this paper is to fill this gap.   To this end, we first introduce the concept of doubly complementary media for electromagnetic waves.  We then provide criteria 
for checking the invisibility of a source. Roughly speaking, we establish that $(i)$  a source is invisible if the power of the plasmonic structure is blown up (Theorem~\ref{thm-main} and the following paragraph);  $(ii)$ a source is invisible if it is sufficiently close to the plasmonic structure and is visible if it is far from the plasmonic structure (Propositions~\ref{pro2} and \ref{pro1}); $(iii)$ if the plasmonic structure is 
complementary to an annulus of constant isotropic medium,  there is a critical length that characterizes the cloaking phenomena,  as observed by Milton and Nicorovici in the acoustic quasi-static regime \cite{MiltonNicorovici} (Theorem~\ref{thm3}).    

Two difficulties in the study of the invisibility of  a source via anomalous localized resonance are as follows. Firstly, the problem is unstable. This can be explained by the fact that the equations describing the phenomena  have sign-changing coefficients; hence the ellipticity and the compactness are lost in general. Secondly, a localized resonance might appear, i.e., the field explodes in some regions and remains bounded in others as the loss goes to 0.  Our analysis  involves three-sphere inequalities and  the localized singularity removal technique introduced in \cite{Ng-Negative-Cloaking, Ng-Superlensing} plays an important role. Negative-index metamaterials have also been investigated using the knowledge of Neumann-Poincare's operator, see e.g., \cite{AKL} and the references therein. Nevertheless, to our knowledge,  the behavior of the fields cannot be addressed using this method unless the family of the eigenfunctions are somehow explicit.

\section{Statements of the main results} 

Let $\omega > 0$, and let $\Omega_1 \Subset  \Omega_2 \Subset  \mR^3$ be smooth,  bounded,  simply connected,  open subsets of $\mR^3$ \footnote{In this paper, the notation $D \Subset \Omega$ means $\bar D \subset \Omega$ for two subsets $D$ and $\Omega$ of $\mR^3$.}.  
Let $\ep, \mup$ be defined in $\mR^3 \setminus (\Omega_2 \setminus \Omega_1)$ and $\en, \mun$ be defined in $\Omega_2 \setminus \Omega_1$ such that 
$\ep, \mup$, $- \en$, and $- \mun$  are real,  symmetric,  {\bf uniformly elliptic},  matrix-valued functions.  Set, for $\delta \ge 0$,  
\begin{equation}\label{def-eDelta}
(\eps_\delta, \mu_\delta)  = \left\{\begin{array}{cl}
\en + i \delta I,  \mun + i \delta I  & \mbox{ in } \Omega_2 \setminus \Omega_1, \\[6pt]
\ep, \mup & \mbox{ in } \mR^3 \setminus (\Omega_2 \setminus \Omega_1). 
\end{array} \right. 
\end{equation}
Here and in what follows, $I$ denotes the $(3 \times 3)$ identity matrix. We also denote $B_R(x)$ as the  open ball in $\mR^3$ centered at $x \in \mR^3$ and of radius $R>0$; when $x = 0$, we simply denote $B_{R}$.   As usual, we assume  that for some $R_0 > 0$,  $\Omega_2 \subset B_{R_0}$ and  $(\ep, \mup) = (I, I)$ in $\mR^3 \setminus B_{R_0}$, and, for the application of the unique continuation principle, see \cite{Tu, BCT},   \begin{equation}
\ep, \mup, \en, \mun \mbox{ are piecewise } C^1. 
\end{equation}

Given $\delta > 0$ and $J \in [L^2(\mR^3)]^3$ with compact support, let $(E_\delta, H_\delta) \in [H_{\loc}(\curl, \mR^3)]^2$ be  the unique radiating solution of the Maxwell equations 
\begin{equation}\label{Main-eq-delta}
\left\{\begin{array}{cl}
\nabla \times E_\delta = i \omega \mu_\delta H_\delta &  \mbox{ in } \mR^3, \\[6pt]
\nabla \times H_\delta = - i \omega \eps_\delta E_\delta + J &  \mbox{ in } \mR^3.  
\end{array} \right.
\end{equation}
Recall that a solution $(E, H) \in [H_{\loc}(\curl, \R^3\setminus B_R)]^2$, for some $R> 0$, of the Maxwell equations 
\[
\begin{cases}
\nabla \times E = i \omega H  &\text{ in } \mathbb{R}^3\setminus B_R,\\[6pt]
\nabla \times H = -i \omega E  &\text{ in } \mathbb{R}^3\setminus B_R,
\end{cases}
\]
is called radiating if it satisfies one of the (Silver-M\"{u}ller) radiation conditions
\begin{equation*}
H \times x - |x| E = O(1/|x|) \quad   \mbox{ and } \quad  E\times x + |x| H = O(1/|x|) \qquad \mbox{ as } |x| \to + \infty. 
\end{equation*}
Herein, for $\alpha \in \mR$, $O(|x|^\alpha)$ denotes a quantity whose norm is bounded by $C |x|^\alpha$ for some constant $C>0$.

For an open subset $\Omega$ of $\mR^3$ of class $C^1$,  one denotes  
\begin{equation*}
H(\curl, \Omega) = \Big\{u \in [L^2(\Omega)]^3; \nabla \times u \in [L^2(\Omega)]^3 \Big\}
\end{equation*}
and 
\begin{equation*}
H_{\loc}(\curl, \Omega) = \Big\{u \in [L^2_{\loc}(\Omega)]^3; \nabla \times u \in [L^2_{\loc}(\Omega)]^3 \Big\}. 
\end{equation*}
One also denotes 
$$
\|u \|_{H(\curl \Omega)} = \| u\|_{L^2(\Omega)}  + \|\nabla \times u \|_{L^2(\Omega)} \mbox{ for } u \in H(\curl, \Omega). 
$$

Physically, $\eps_\delta$ and $\mu_\delta$ describe the permittivity and the permeability of the considered medium,   and $\omega$ is the frequency. The set $\Omega_2 \setminus \Omega_1$ is   the shell region (plasmonic structure) in which the permittivity and the permeability are negative,  and $i \delta I$ describes the loss of this plasmonic structure.   
For $\supp J \cap (\Omega_2 \setminus \Omega_1) = \emptyset$, the power $\cP_\delta(E_\delta, H_\delta)$, or more precisely the dissipation energy,  in $\Omega_2 \setminus \Omega_1$, is defined by 
\begin{equation}\label{def-power}
\cP_\delta (E_\delta, H_\delta) = \delta  \int_{\Omega_2 \setminus \Omega_1} |(E_\delta, H_\delta)|^2. 
\end{equation}

In this paper, we investigate the behavior of $(E_\delta, H_\delta)$ away from the plasmonic shell and  the behavior of  the power $\cP_\delta$ as $\delta \to 0$ in the doubly complementary setting that will be introduced in Definition~\ref{def-DCM}, wherein localized resonance can occur. 
As seen later, the behavior of $\cP_\delta$ depends strongly on the location of $\supp J$ relative to the shell $\Omega_2 \setminus \Omega_1$ (Propositions~\ref{pro2} and \ref{pro1} and Theorem~\ref{thm3}). As a consequence of our results, one derives that 
a source is relatively invisible when the power is explored as $\delta \to 0$.

We now describe the problem in more detail. Given a matrix-valued function  $A$  defined in $\Omega$, a bi-Lipschitz homeomorphism ${\mathcal T}: \Omega \to \Omega'$,  and a vector field $j$ defined in $\Omega$, the following standard notations are used, for $y \in \Omega'$: 
\begin{equation*} 
{\mathcal T}_* A(y) = \frac{\nabla {\mathcal T}  (x)  A (x) \nabla  {\mathcal T} ^{T}(x)}{\det \nabla  {\mathcal T}(x)} \quad \mbox{ and } \quad {\mathcal T}_*  j (y) = \frac{j(x)}{\det \nabla  {\mathcal T}(x)},
\end{equation*}
with $x ={\mathcal T}^{-1}(y)$.  We first recall the definition of complementary media  \cite{Ng-Superlensing-Maxwell}:

\begin{definition}[Complementary media]   \fontfamily{m} \selectfont
 \label{def-Geo} Let $\Omega_1 \Subset  \Omega_2 \Subset  \Omega_3 \Subset  \mR^3$ be smooth,  bounded,   simply connected,  open subsets of $\mR^3$. The medium in $\Omega_2 \setminus \Omega_1$ characterized by a pair of two symmetric matrix-valued functions $(\eps_1, \mu_1)$  and the medium in $\Omega_3 \setminus \Omega_2$ characterized by a pair of two symmetric, uniformly elliptic, matrix-valued functions  $(\eps_2, \mu_2)$  are said to be  {\it  complementary} if 
there exists a diffeomorphism $\cF: \Omega_2 \setminus \bar \Omega_1 \to \Omega_3 \setminus \bar \Omega_2$ such that $\cF \in C^1(\bar \Omega_2 \setminus \Omega_1)$, 
\begin{equation}\label{cond-ASigma}
(\cF_*\eps_1, \cF_*\mu_1)   = (\eps_2, \mu_2)   \mbox{ for  } x \in  \Omega_3 \setminus \Omega_2, 
\end{equation}
\begin{equation}\label{cond-F-boundary}
\cF(x) = x \mbox{ on } \partial \Omega_2, 
\end{equation}
and the following two conditions hold: 1) There exists an diffeomorphism extension of $\cF$, which is still denoted by  $\cF$, from $\Omega_2 \setminus \{x_1\} \to \mR^3 \setminus \bar \Omega_2$ for some $x_1 \in \Omega_1$; and 2) there exists a diffeomorphism $\cG: \mR^3 \setminus \bar \Omega_3 \to \Omega_3 \setminus \{x_1\}$ such that $\cG \in C^1(\mR^3 \setminus \Omega_3)$, 
$\cG(x) = x \mbox{ on } \partial \Omega_3$,
and $
\cG \circ \cF : \Omega_1  \to \Omega_3 \mbox{ is a diffeomorphism if one sets } \cG \circ \cF(x_1) = x_1.
$
\end{definition}

Conditions  \eqref{cond-ASigma} and \eqref{cond-F-boundary} are the main assumptions in Definition~\ref{def-Geo}. 
Conditions 1) and 2)  are  mild assumptions. Introducing $\cG$ makes the analysis more accessible.  The key point behind this requirement is roughly  the following property. Assume that $(E, H) \in [H(\curl, \Omega_3 \setminus \Omega_1)]^2$ is a solution of the system 
 \begin{equation*} \left\{
\begin{array}{cl}
\nabla \times E = i \omega  \mu H & \mbox{ in } \Omega_3 \setminus \Omega_1, \\[6pt]
\nabla \times H = -  i \omega \eps E & \mbox{ in } \Omega_3 \setminus \Omega_1, 
\end{array}\right. 
\end{equation*}
with $(\eps,\mu)  = (\eps_1, \mu_1)$ in $\Omega_3 \setminus \Omega_2$ and $(\eps_2, \mu_2)$ otherwise.  Set 
$$
(\hE, \hH) =(\cF*E, \cF*H) \mbox{ in } \Omega_3 \setminus \Omega_2 
$$
(see \eqref{TEH} for the notation ${\mathcal F}*$).  Then, by a change of variables (Lemma~\ref{lem-TO}),  $(\hE, \hH)$ satisfies the same Maxwell system as $(E, H)$ in $\Omega_3 \setminus \Omega_2$ and 
$$
\hE \times \nu = E \times \nu \mbox{ on } \partial \Omega_2  \quad \mbox{ and } \quad \hH \times \nu = H \times \nu \mbox{ on } \partial \Omega_2. 
$$
Since $(\eps_2, \mu_2)$ is symmetric and  uniformly elliptic,  by the unique continuation principle,  one derives that $(\hE, \hH) = (E, H)$ in $\Omega_3 \setminus \Omega_2$. 
Fields in a medium which  does not satisfy the complementary condition would be stable, i.e., $(E_\delta, H_\delta)$ is bounded away the interface $\partial (\Omega_2 \setminus \Omega_1)$, in general; see \cite{Ng-WP} for  a discussion on this topic in the acoustic case.

\medskip 
We are ready to introduce the concept of  doubly complementary media.  

\begin{definition} \label{def-DCM} \fontfamily{m} \selectfont The medium $(\eps_0, \mu_0)$ given in \eqref{def-eDelta} with $\delta  = 0$ is said to be {\it doubly complementary} if  for some $\Omega_2 \Subset  \Omega_3$, $(\ep, \mup)$ in $\Omega_3 \setminus \Omega_2$ and $(\en, \mun)$ in $\Omega_2 \setminus \Omega_1 $ are  complementary, and 
\begin{equation}\label{DCM}
(\cG_* \cF_*\ep, \cG_* \cF_*\mup) = (\ep,  \mup)  \mbox{ in } \Omega_3 \setminus \Omega_2 
\end{equation}
for some $\cF$ and $\cG$ from Definition~\ref{def-Geo}. 
\end{definition}

\begin{remark} \rm The reason complementary media satisfying \eqref{DCM} are called doubly complementary is that 
$(\en, \mun)$ in $\Omega_2 \setminus \Omega$ is not only complementary to $(\ep, \mup)$ in $\Omega_3 \setminus \Omega_2$ but also  complementary to 
$(\ep, \mup)$ in $\cF^{-1} \circ \cG^{-1} (\Omega_3 \setminus \Omega_2)$. 
\end{remark}

\begin{remark}\rm
The definition of doubly complementary media is local, and this medium can be obtained as follows. Fix an arbitrary pair $(\en, \mun)$ in $\Omega_2 \setminus \Omega_1$ and a pair of ${\cF, \cG}$ given in Definition~\ref{def-Geo}. One then determines $(\ep, \mup)$ in $\Omega_3 \setminus \Omega_2$ and in $D : = \cF^{-1} \circ \cG^{-1}(\Omega_3 \setminus \Omega_2)$ by 
$$
(\ep, \mup) = (\cF_*\en, \cF*\mun) \mbox{ in } \Omega_3 \setminus \Omega_2,
$$
$$
(\ep, \mup)  =  (\cF^{-1}* \cG^{-1} * \cF_*\en, \cF^{-1}* \cG^{-1} *\cF*\mun) \mbox{ in } D. 
$$
The choice of $(\ep, \mup)$ outside $(\Omega_3 \setminus \Omega_1)  \cup D$ is arbitrary.  
\end{remark}

Given a subset $\Omega$ of $\mR^3$, we denote  $\mathds{1}_\Omega$ as its characteristic function.  For a doubly complementary medium $(\eps_0, \mu_0)$, set
\begin{equation}\label{def-tepsmu}
(\teps, \tmu) : = \left\{\begin{array}{cl} (\ep, \mup) & \mbox{ in } \mR^3 \setminus \Omega_3, \\[6pt]
(\cG_*\cF_*\ep, \cG_*\cF_*\mup) & \mbox{ in } \Omega_3, 
\end{array}\right.
\end{equation}
\begin{equation}\label{def-tJ}
\tJ =  \mathds{1}_{\mR^3 \setminus \Omega_2}  J -  \mathds{1}_{\Omega_3 \setminus \Omega_2}  \cF_*J +  \mathds{1}_{\Omega_3} \cG_*\cF_*J \mbox{ in } \mR^3, 
\end{equation}
and let $(\tE, \tH) \in [H_{\loc}(\curl, \mR^3)]^2$ be the unique radiating solution of 
\begin{equation}\label{sys-tEH} \left\{
\begin{array}{cl}
\nabla \times \tE = i \omega \tmu \tH, & \mbox{ in } \mR^3, \\[6pt]
\nabla \times \tH = - i \omega \teps \tE + \tJ, & \mbox{ in } \mR^3. 
\end{array}\right. 
\end{equation}
Note that if $(\eps_0, \mu_0)$ is doubly complementary, then  $\teps$ and $\tmu$ are uniformly elliptic in $\mR^3$ since $\det \cF < 0$ and $\det \cG < 0$.

\medskip

The first result of this paper is on  the behavior of $(E_\delta, H_\delta)$ outside $\Omega_3$ as $\delta \to 0$:   

\begin{theorem}\label{thm-main}
Let  $0< \delta < 1$, $J \in [L^2(\mR^d)]^3$ with $\supp J \subset B_{R_0}$, and let $(E_\delta, H_\delta) \in [H_{\loc}(\curl, \mR^3)]^2$ be the unique radiating  solution of \eqref{Main-eq-delta}.  Assume that $(\eps_0, \mu_0)$ is doubly complementary. 
Then, for $R> 0$,   
\begin{equation}\label{part1}
\| (E_\delta, H_\delta)  \|_{L^2(B_R \setminus \Omega_3)} \le C_R \| J \|_{L^2(\mR^3)}
\end{equation}
for some positive constant $C_R$ that  depending on $R$ but is independent of $J$ and $\delta$.  
Moreover, 
\begin{equation}\label{part2}
(E_\delta, H_\delta)  \mbox{ converges to }  (\tE, \tH)  \mbox{ in } [L^2_{\loc}(\mR^3 \setminus \Omega_3)]^6 \mbox{ as } \delta \to 0, 
\end{equation}
where $(\tE, \tH) \in [H_{\loc}(\curl, \mR^3)]^2$ is the unique radiating solution of \eqref{sys-tEH}. 
\end{theorem}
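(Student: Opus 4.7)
The plan is to exploit the doubly complementary structure to construct, from $(E_\delta, H_\delta)$, a global radiating field $(\check{E}_\delta, \check{H}_\delta)$ on $\mathbb{R}^3$ that solves Maxwell with the \emph{uniformly elliptic} coefficients $(\teps, \tmu)$ and source $\tJ$, modulo a perturbation of size $O(\sqrt{\delta})$ in $L^2$. Given such a field, both \eqref{part1} and \eqref{part2} follow from the well-posedness of the limit exterior problem \eqref{sys-tEH}, which is standard once the coefficients are uniformly elliptic and the unique continuation principle is available (this is why the piecewise-$C^1$ regularity of the coefficients is assumed).

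To construct $(\check{E}_\delta, \check{H}_\delta)$, first set $(\hE_\delta, \hH_\delta) := (\cF * E_\delta, \cF * H_\delta)$ on $\Omega_3 \setminus \bar{\Omega}_2$. By Lemma~\ref{lem-TO} and the complementarity identity $(\cF_*\en, \cF_*\mun) = (\ep, \mup)$, this pair satisfies Maxwell with coefficients $(\ep, \mup)$ and source $-\cF_* J$, up to an $O(\delta)$ coefficient perturbation arising from the loss $i\delta I$; since $\cF = \mathrm{Id}$ on $\partial \Omega_2$, its tangential traces there coincide with those of $(E_\delta, H_\delta)$. Consequently, the restriction of $(E_\delta, H_\delta)$ to $\mathbb{R}^3 \setminus \bar{\Omega}_2$ and $(\hE_\delta, \hH_\delta)$ on $\Omega_3 \setminus \bar{\Omega}_2$ glue across $\partial \Omega_2$ into a single $H_{\loc}(\curl)$-field. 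Iterating the construction with $\cG$ (applied to the extension of $\cF$ through $\Omega_2 \setminus \{x_1\}$) extends the field into $\Omega_3$; the doubly complementary identity \eqref{DCM} makes the new coefficients coincide with $(\teps, \tmu)$ inside $\Omega_3$, the extra source contributions reproduce exactly \eqref{def-tJ}, and tangential traces again match across $\partial \Omega_3$ because $\cG = \mathrm{Id}$ there. The one possible obstruction to global $H_{\loc}(\curl)$-regularity is at $x_1$, where the composed pushforward may be singular; this is absorbed by the localized singularity removal lemma of \cite{Ng-Negative-Cloaking, Ng-Superlensing}, producing the desired radiating field $(\check{E}_\delta, \check{H}_\delta)$ on $\mathbb{R}^3$, which agrees with $(E_\delta, H_\delta)$ on $\mathbb{R}^3 \setminus \Omega_3$.

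Testing \eqref{Main-eq-delta} against $(\overline{E_\delta}, \overline{H_\delta})$ and taking imaginary parts yields the dissipation bound $\cP_\delta(E_\delta, H_\delta) \le C \|J\|_{L^2(\mathbb{R}^3)} \|E_\delta\|_{L^2(\supp J)}$, so the $L^2$ norm of the $\delta$-source perturbation in the equation for $(\check{E}_\delta, \check{H}_\delta)$ is $O(\sqrt{\delta})$ times the $L^2$ norm of $(E_\delta, H_\delta)$ on the shell. A standard compactness–contradiction argument on the uniformly elliptic system \eqref{sys-tEH}, combined with well-posedness of the radiating exterior problem, then yields $\|(\check{E}_\delta, \check{H}_\delta)\|_{L^2(B_R)} \le C_R \|J\|_{L^2}$, of which \eqref{part1} is the restriction to $\mathbb{R}^3 \setminus \Omega_3$. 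Extracting a weakly convergent subsequence, passing to the limit (the perturbation vanishes), and identifying the limit by uniqueness proves \eqref{part2} after a local elliptic upgrade from weak to strong $L^2$ convergence. The main obstacle is closing this compactness loop: the bound on the perturbation is only by $\sqrt{\delta}$ times the field's own norm, so one must rule out a scenario in which localized resonance inside the shell inflates $(E_\delta, H_\delta)$ enough to swamp the smallness. This is precisely where the three-sphere inequalities mentioned in the introduction enter, transferring quantitative blow-up control between the inside and outside of the plasmonic shell and forbidding such pathological concentrations.
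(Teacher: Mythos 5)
Your overall strategy---reflect with $\cF$ and $\cG$, build an auxiliary field solving the uniformly elliptic system \eqref{sys-tEH} up to a small perturbation, then invoke three-sphere inequalities---is indeed the paper's strategy, but the two load-bearing steps are not carried out correctly. First, the construction of $(\check E_\delta,\check H_\delta)$ as described does not produce an $H_{\loc}(\curl)$ field. You propose to glue the restriction of $(E_\delta,H_\delta)$ to $\mR^3\setminus\bar\Omega_2$ with $(\cF*E_\delta,\cF*H_\delta)$ on $\Omega_3\setminus\bar\Omega_2$ ``across $\partial\Omega_2$'', but both fields live on the \emph{same} side of $\partial\Omega_2$, so this is not a gluing; and if one instead overwrites $(E_\delta,H_\delta)$ by $(\cF*E_\delta,\cF*H_\delta)$ in $\Omega_3\setminus\Omega_2$, the tangential traces fail to match those of the unmodified exterior field on $\partial\Omega_3$. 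The correct auxiliary field (the paper's $(\tE_\delta,\tH_\delta)$ in \eqref{main-tEH}) keeps $(E_\delta,H_\delta)$ outside $\Omega_3$, equals $(E_\delta,H_\delta)-(\cF*E_\delta,\cF*H_\delta)+(\cG*\cF*E_\delta,\cG*\cF*H_\delta)$ in $\Omega_3\setminus\Omega_2$, and equals $(\cG*\cF*E_\delta,\cG*\cF*H_\delta)$ in $\Omega_2$; the transmission conditions on $\partial\Omega_2$ together with $\cF=\mathrm{Id}$ on $\partial\Omega_2$ and $\cG=\mathrm{Id}$ on $\partial\Omega_3$ then make all traces match. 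Subtracting $(\cF*E_\delta,\cF*H_\delta)-(\cG*\cF*E_\delta,\cG*\cF*H_\delta)$ in the middle annulus---the part of the field that may blow up as $\delta\to0$---is what ``localized singularity removal'' means here; it is not a device for repairing a point singularity of the maps at $x_1$, which is how your write-up deploys it.

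Second, the quantitative argument is left open exactly where the work is. For \eqref{part1} you bound the perturbation by $O(\sqrt\delta)$ times (a power of) the field's own norm and then appeal to an unspecified compactness--contradiction argument, conceding that localized resonance might swamp the smallness. No absorption is needed: Lemma~\ref{lem-stability} (H\"older applied to the dissipation identity) gives the crude bound $\|(E_\delta,H_\delta)\|_{L^2(B_{R_0})}\le C\delta^{-1}\|J\|_{L^2(\mR^3)}$, and since the perturbation in the equation for $(\tE_\delta,\tH_\delta)$ carries an explicit factor $\delta$, it is $O(\|J\|_{L^2(\mR^3)})$ outright; linear well-posedness of \eqref{sys-tEH} then yields \eqref{part1} with no contradiction argument and no three-sphere inequality. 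For \eqref{part2}, by contrast, your assertion that ``the perturbation vanishes'' in the limit is unjustified: with only the $\delta^{-1}$ bound the perturbation is merely $O(1)$, not $o(1)$. This is where the three-sphere inequalities are genuinely needed: applying Lemma~\ref{lem-3spheres} on compact subsets of $\Omega_1$, $\Omega_2\setminus\bar\Omega_1$, and $\mR^3\setminus\bar\Omega_3$ covering $\supp J$, and interpolating between the $O(\|J\|_{L^2(\mR^3)})$ bound already obtained (on a small exterior ball, resp.\ on $\cF^{-1}\circ\cG^{-1}(\Omega_2)$) and the global $\delta^{-1}$ bound, gives $\|(E_\delta,H_\delta)\|_{L^2(\supp J)}\le C\delta^{\alpha-1}\|J\|_{L^2(\mR^3)}$; feeding this into \eqref{lem-stability-p1} improves the global bound to $C\delta^{\alpha/2-1}\|J\|_{L^2(\mR^3)}$, so the perturbation is $O(\delta^{\alpha/2})\to0$ and \eqref{part2} follows. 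Without this loop your proof of the convergence statement does not close.
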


\begin{remark} \rm Estimate~\eqref{part1} confirms that, $(E_\delta, H_\delta)$ is bounded in $L^2_{\loc}(\mR^3 \setminus \bar \Omega_3)$. 
\end{remark}

As seen later in Proposition~\ref{pro1}  and Theorem~\ref{thm3}, the power can blow up in the setting considered in Theorem~\ref{thm-main}. Nevertheless,  the fields $(E_\delta, H_\delta)$ remain bounded outside $\Omega_3$: localized resonance therefore appears.  Theorem~\ref{thm-main} implies the equivalence between the blow up of the power and the invisibility of a source in the doubly complementary setting. This can be derived as follows.  Fix $J \in [L^2(\mR^3)]^3$ with $\supp J \subset B_{R_0}$. Suppose that the power $\cP_{\delta_n}(E_{\delta_n}, H_{\delta_n})$ blows up for some $\delta_n \to 0$,  i.e.,  
\begin{equation*}
\lim_{n \to \infty}  \cP_{\delta_n}(E_{\delta_n}, H_{\delta_n})  = + \infty, 
\end{equation*}
where $(E_\delta, H_\delta) \in [H(\curl, \mR^3)]^2$ is the unique radiating solution of \eqref{Main-eq-delta}. Set 
\begin{equation*}
\hE_\delta = c_\delta E_\delta, \quad  \hH_\delta = c_\delta H_\delta, \quad \mbox{ and } \quad \hJ_\delta = c_\delta J,  
\end{equation*}
where $c_\delta = \cP_\delta(E_\delta, H_\delta)^{-1/2}$. Then
$$
\lim_{n \to + \infty} \cP_{\delta_n} (\hE_{\delta_n}, \hH_{\delta_n}) = 1 \quad \mbox{ and } \quad \lim_{n \to + \infty} \| \hJ_{\delta_n} \|_{L^2(\mR^3)} = 0. 
$$
It follows from \eqref{part1} of Theorem~\ref{thm-main} that 
\begin{equation*}
\lim_{n \to + \infty} \| (\hE_{\delta_n}, \hH_{\delta_n}) \|_{L^2(B_R \setminus \Omega_3)} = 0:  
\end{equation*}
the source is invisible for observers outside $\Omega_3$ after the renormalization to have the boundedness of the power.   Theorem~\ref{thm-main} is, to our knowledge, the first result providing the connection between the blow up of the power and the invisibility of a general source in a general setting for electromagnetic waves via anomalous localized resonance. The starting point of the proof of Theorem~\ref{thm-main} involves the reflections $\cF$ and $\cG$ in the definition of doubly complementary media. The proof also uses three-sphere inequalities  and the localized singularity removal technique in an essential way.

\medskip 
When the support of $J$ is outside $\Omega_3$, one can prove that $(E_\delta, H_\delta)$ remains bounded in $[L^2_{\loc}(\mR^3)]^6$. More precisely, we have the following  slightly more general result:

\begin{proposition} \label{pro2} Let $0< \delta < 1$,   $J_e, J_m \in [L^2(\mR^3)]^3$ with $\supp J_e, \supp J_m \subset B_{R_0}$, and let $(E_\delta, H_\delta) \in [H_{\loc}(\curl, \mR^3)]^2$ be the unique radiating  solution of 
\begin{equation}\label{Main-eq-delta-1}
\left\{\begin{array}{cl}
\nabla \times E_\delta = i \omega \mu_\delta H + J_e &  \mbox{ in } \mR^3, \\[6pt]
\nabla \times H_\delta = - i \omega \eps_\delta E + J_m &  \mbox{ in } \mR^3.  
\end{array} \right.
\end{equation}
 Assume that $(\eps_0, \mu_0)$ is doubly complementary,  
$\supp J_e \cap (\Omega_3 \setminus O) = \emptyset$,  and $\supp J_m \cap (\Omega_3 \setminus O) = \emptyset$, where $O := \cF^{-1}\circ \cG^{-1}(\Omega_2)$.  We have,  for $R>0$,  
\begin{equation}\label{pro2-p1}
\| (E_\delta, H_\delta) \|_{L^2(B_R)} \le C_R \| (J_e, J_m)\|_{L^2(\mR^3)}.   
\end{equation}
Moreover, $(E_\delta, H_\delta)$ converges weakly to $(E_0, H_0) $ in $[L^2_{\loc}(\mR^3)]^6$  the unique radiating solution in $[H_{\loc}(\curl, \mR^3)]^2$ of \eqref{Main-eq-delta-1} with $\delta  = 0$. We also have 
\begin{equation}\label{pro2-p2}
\| (E_\delta, H_\delta) - (\tE, \tH) \|_{L^2 \big(B_R \setminus \Omega_3\big)} \le C_R \delta \| (J_e, J_m) \|_{L^2(\mR^3)},  
\end{equation}
where $(\tE, \tH) \in [H_{\loc}(\curl, \mR^3)]^2$ is the unique radiating solution of 
\begin{equation}\label{sys-tEH} \left\{
\begin{array}{cl}
\nabla \times \tE = i \omega \tmu \tH + \tJ_e, & \mbox{ in } \mR^3, \\[6pt]
\nabla \times \tH = - i \omega \teps \tE + \tJ_m, & \mbox{ in } \mR^3, 
\end{array}\right. 
\end{equation}
with $\tJ_e =  \mathds{1}_{\mR^3 \setminus \Omega_3}  J_e +  \mathds{1}_{\Omega_2} \cG_*\cF_*J_e$  and $\tJ_m =  \mathds{1}_{\mR^3 \setminus \Omega_3}  J_m +  \mathds{1}_{\Omega_2} \cG_*\cF_*J_m$ in $\mR^3$. 
Here  $C_R$ denotes a positive constant that depends on $R$ but is independent of $(J_e, J_m)$ and $\delta$. 
\end{proposition}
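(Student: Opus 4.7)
The plan is to exploit the doubly complementary structure to build, out of $(E_\delta, H_\delta)$, an auxiliary field $(\hE_\delta, \hH_\delta)$ on all of $\mR^3$ that solves a Maxwell system with coefficients close to $(\teps, \tmu)$ and source $(\tJ_e, \tJ_m)$, then to invoke standard well-posedness / perturbation for the uniformly elliptic limit problem. The support hypothesis $\supp J_{e/m} \cap (\Omega_3 \setminus O) = \emptyset$ is precisely what kills the middle reflected piece $-\mathds{1}_{\Omega_3 \setminus \Omega_2} \cF_* J$ that appears in the general definition of $\tJ$ used for Theorem~\ref{thm-main}, and it removes the localized resonance that forces the three-sphere / singularity-removal machinery there. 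I therefore expect a much cleaner argument here.

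To build the extension, keep $(\hE_\delta, \hH_\delta) = (E_\delta, H_\delta)$ on $\mR^3 \setminus \Omega_3$; on $\Omega_3 \setminus \Omega_2$ set $(\hE_\delta, \hH_\delta) = (\cF * E_\delta, \cF * H_\delta)$, the Piola-type push-forward of $(E_\delta, H_\delta)|_{\Omega_2 \setminus \Omega_1}$ from Lemma~\ref{lem-TO}; and on $\Omega_2$ set $(\hE_\delta, \hH_\delta) = ((\cG \circ \cF) * E_\delta, (\cG \circ \cF) * H_\delta)$, the push-forward of $(E_\delta, H_\delta)|_O$ through the diffeomorphism $\cG \circ \cF : \Omega_1 \to \Omega_3$ restricted to $O$. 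Tangential traces glue across $\partial \Omega_2$ and $\partial \Omega_3$ because $\cF = \operatorname{id}$ on $\partial \Omega_2$ and $\cG = \operatorname{id}$ on $\partial \Omega_3$. The change-of-variables rule combined with the doubly complementary identities $\cF_* \en = \ep$, $\cF_* \mun = \mup$, $\cG_* \cF_* \ep = \ep$, $\cG_* \cF_* \mup = \mup$ then shows that the extension satisfies
\begin{equation*}
\nabla \times \hE_\delta = i\omega(\tmu + \delta M_\delta)\hH_\delta + \tJ_e, \qquad \nabla \times \hH_\delta = -i\omega(\teps + \delta N_\delta)\hE_\delta + \tJ_m \quad \text{in } \mR^3,
\end{equation*}
where $M_\delta, N_\delta$ are matrix-valued, supported in $\Omega_3$, and uniformly bounded in $L^\infty$. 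The right-hand side is exactly $(\tJ_e, \tJ_m)$: the hypothesis kills $J_{e/m}|_{\Omega_2 \setminus \Omega_1}$ (so no $\cF_*$-reflected source appears on $\Omega_3 \setminus \Omega_2$), while $(\cG \circ \cF)_* J_{e/m}|_O$ lands in $\Omega_2 = (\cG \circ \cF)(O)$.

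Since $(\teps, \tmu)$ is uniformly elliptic on $\mR^3$ and the $\delta$-perturbation is $O(\delta)$ in $L^\infty$, well-posedness of the radiating Maxwell system at $\delta = 0$ together with a standard Fredholm / Neumann-series perturbation yields $\|(\hE_\delta, \hH_\delta)\|_{L^2(B_R)} \le C_R \|(J_e, J_m)\|_{L^2(\mR^3)}$ uniformly for small $\delta$. Pulling back through $\cF^{-1}$ transfers this to $(E_\delta, H_\delta)$ on $\Omega_2 \setminus \Omega_1$, and through $(\cG \circ \cF)^{-1}$ to $(E_\delta, H_\delta)$ on $O$. The remaining region $\Omega_1 \setminus O$ is handled by an interior energy estimate for the uniformly elliptic homogeneous Maxwell system there (the source vanishes since $\Omega_1 \setminus O \subset \Omega_3 \setminus O$), using the tangential traces on $\partial \Omega_1 \cup \partial O$ already controlled. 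Assembling the pieces gives \eqref{pro2-p1}. Weak convergence $(E_\delta, H_\delta) \weakconverge (E_0, H_0)$ in $[L^2_{\loc}(\mR^3)]^6$ follows by extracting a weakly convergent subsequence, passing to the limit in \eqref{Main-eq-delta-1}, and invoking uniqueness of the radiating solution at $\delta = 0$.

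For \eqref{pro2-p2}, subtracting the equation for $(\tE, \tH)$ from that for $(\hE_\delta, \hH_\delta)$ gives a Maxwell system with coefficients $(\teps, \tmu)$ and right-hand side $(i\omega \delta M_\delta \hH_\delta, -i\omega \delta N_\delta \hE_\delta)$, of $L^2$ norm bounded by $C \delta \|(J_e, J_m)\|_{L^2}$ by the uniform bound just established. The same well-posedness estimate gives an $O(\delta)$ bound on this difference in $[L^2(B_R)]^6$, and restricting to $\mR^3 \setminus \Omega_3$, where $\hE_\delta = E_\delta$ and $\hH_\delta = H_\delta$, yields \eqref{pro2-p2}. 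I expect the main technical obstacle to lie in the reflection step: carefully verifying the Piola-type push-forward identity for the Maxwell system across the interfaces $\partial \Omega_2$ and $\partial \Omega_3$ and confirming that the resulting right-hand side is exactly $(\tJ_e, \tJ_m)$, with no spurious singular or interface distributions. Once this identification is in place, the remainder reduces to classical perturbation theory for the uniformly elliptic limit problem.
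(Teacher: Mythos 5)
There is a genuine gap in the reflection step, which you yourself flagged as the main technical obstacle: your piecewise field $(\hE_\delta,\hH_\delta)$ does not have matching tangential traces across $\partial\Omega_2$ or $\partial\Omega_3$, so it is not in $[H_{\loc}(\curl,\mR^3)]^2$ and does not solve the global system you write down. On $\partial\Omega_3$ the trace of $\cF*E_\delta$ from inside is the push-forward of the trace of $E_\delta$ on $\partial\Omega_1=\cF^{-1}(\partial\Omega_3)$, which for $\delta>0$ is unrelated to the trace of $E_\delta$ on $\partial\Omega_3$ from outside; similarly, on $\partial\Omega_2$ the inner field $(\cG\circ\cF)*E_\delta$ carries the trace of $E_\delta$ on $\partial O$, not on $\partial\Omega_2$. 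The identities $\cF=\mathrm{id}$ on $\partial\Omega_2$ and $\cG=\mathrm{id}$ on $\partial\Omega_3$ give matchings of a different pattern: $\cF*E_\delta$ matches $E_\delta$ across $\partial\Omega_2$, and $\cG*\cF*E_\delta$ matches $\cF*E_\delta$ across $\partial\Omega_3$. The field that actually glues (the localized singularity removal) is the paper's $(\tE_\delta,\tH_\delta)$: equal to $(E_\delta,H_\delta)$ outside $\Omega_3$, to $(E_\delta,H_\delta)-(\cF*E_\delta,\cF*H_\delta)+(\cG*\cF*E_\delta,\cG*\cF*H_\delta)$ in $\Omega_3\setminus\Omega_2$, and to $(\cG*\cF*E_\delta,\cG*\cF*H_\delta)$ in $\Omega_2$. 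With this correction your second step also collapses: the $\delta$-terms in the system for $(\tE_\delta,\tH_\delta)$ are $\mp\delta\omega\,\mathds{1}_{\Omega_3\setminus\Omega_2}\cF_*I$ applied to $(H_{1,\delta},E_{1,\delta})=(\cF*H_\delta,\cF*E_\delta)$; these are source terms driven by the original field, not an $O(\delta)$ perturbation of the coefficients acting on the new unknown, so there is no Neumann series to sum and one needs an independent uniform bound on $\|(E_\delta,H_\delta)\|_{L^2(B_{R_0})}$ before anything can be concluded.

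The paper obtains that bound by a route you do not have: it writes the limit explicitly, $(E_0,H_0)=(\tE,\tH)$ outside $\Omega_2$, $(\cF^{-1}*\tE,\cF^{-1}*\tH)$ in $\Omega_2\setminus\Omega_1$, and $(\cF^{-1}*\cG^{-1}*\tE,\dots)$ in $\Omega_1$; checks via Lemma~\ref{lem-TO} and unique continuation that this is the unique radiating solution at $\delta=0$; and then applies Lemma~\ref{lem-stability} to $(E_\delta-E_0,H_\delta-H_0)$, whose sources $i\omega(\mu_0-\mu_\delta)H_0$ and $-i\omega(\eps_0-\eps_\delta)E_0$ have $L^2$ norm $O(\delta)\|(E_0,H_0)\|_{L^2(\Omega_2\setminus\Omega_1)}$, so that the factor $\delta^{-1}$ in \eqref{lem-stability-p2} is cancelled and \eqref{pro2-p1} follows. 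Once \eqref{pro2-p1} is in hand, your treatment of \eqref{pro2-p2} (elliptic estimate for $(\tE_\delta-\tE,\tH_\delta-\tH)$ with an $O(\delta)$ right-hand side) and of the weak convergence (subsequence plus uniqueness of the limit) is essentially the paper's. So the skeleton is right, but the two load-bearing steps --- the gluing and the source of the uniform bound --- both need to be replaced.
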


When $J_m = J$ and $J_e = 0$, Proposition~\ref{pro2} implies that the source $J$ is invisible away the shell $\Omega_2 \setminus \Omega_1$ since $(E_0, H_0) = (\tE, \tH)$ outside $\Omega_3$.  Proposition~\ref{pro2} will be used in the proof of Theorem~\ref{thm3} below  where a critical length is revealed  for the blow up of the power 
when the plasmonic structure is complementary to an annulus of constant, isotropic medium.

\medskip 
Concerning the blow up of the power, we prove the following result for a  class of media in which the  complementary property holds only locally. 

\begin{proposition}\label{pro1} Let $0< \delta < 1$, $0 < \alpha < 1$,  $J \in [L^2(\mR^3)]^3$ with $\supp J \subset B_{R_0}$,  and let $(E_\delta, H_\delta) \in [H_{\loc}(\curl, \mR^3)]^2$ be the unique radiating  solution of \eqref{Main-eq-delta}.   Assume that there exists  a diffeomorphism $\cF: \Omega_2 \setminus \Omega_1 \to \Omega_3 \setminus \Omega_2$ for some $\Omega_2 \Subset  \Omega_3 \Subset \mR^3$ such that $\cF \in C^1(\bar \Omega_2 \setminus \Omega_1)$,  $\cF(x) = x$ on $\partial \Omega_2$, and 
\begin{equation*}
(\ep, \mup) = (\cF_*\en, \cF_*\mun) \mbox{ in } D \mbox{ where } D : = B_{r_0}(x_0) \cap (\Omega_3 \setminus \Omega_2)  
\end{equation*}
for some $x_0 \in \partial \Omega_2$ and $r_0> 0$, and 
assume that $(\eps_0, \mu_0)$ is of class $C^2$ in $\overline D$.  There exists $0 < \tau_0 < r_0$, depending only on $(\ep, \mup)$ in $D$ and $D$ such that  
if \Big($D_1 := D \cap B_{\tau_0}(x_0)$ and the system \begin{equation}\label{pro1-g-S}
\left\{\begin{array}{cl}
\nabla \times E = i \omega \mu H \mbox{ in } D_1, \\[6pt] 
 \nabla \times H = - i \omega \eps E +  J - \cF_*J  \mbox{ in } D_1, \\[6pt] 
E \times \nu = H \times \nu = 0  \mbox{ on } \partial D_1\setminus  \partial B_{\tau_0}(x_0),
\end{array}\right.
\end{equation}
has no solution  solution in $[H(\curl, D_1)]^2$  \Big), then 
\begin{equation}\label{pro1-p1}
\limsup_{\delta \to 0} \delta^{2 \alpha}  \| (E_\delta, H_\delta)\|_{L^2(\Omega_2 \setminus \Omega_1)}^2 = + \infty. 
\end{equation}
\end{proposition}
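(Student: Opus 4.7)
The plan is a proof by contradiction that uses the local reflection $\cF$ to convert the putative $\delta^{-\alpha}$ control of the fields into a solvability statement for \eqref{pro1-g-S}. Assume \eqref{pro1-p1} fails, so that along some sequence $\delta_n \to 0$ one has $\|(E_{\delta_n}, H_{\delta_n})\|_{L^2(\Omega_2 \setminus \Omega_1)} \le C \delta_n^{-\alpha}$. Define the reflected fields $(\hE_n, \hH_n) := (\cF_* E_{\delta_n}, \cF_* H_{\delta_n})$ on $D$. By the change-of-variables rule for the Maxwell operator (Lemma~\ref{lem-TO}) together with the local complementary identity $(\ep, \mup) = (\cF_*\en, \cF_*\mun)$ on $D$, the pair $(\hE_n, \hH_n)$ solves in $D$ the Maxwell system with coefficients $(\ep + i\delta_n \cF_* I,\, \mup + i\delta_n \cF_* I)$ and source $\cF_* J$; moreover $\|(\hE_n, \hH_n)\|_{L^2(D)} = O(\delta_n^{-\alpha})$.

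Subtracting \eqref{Main-eq-delta} from this transformed system on $D$, the differences $W_n := \hE_n - E_{\delta_n}$ and $V_n := \hH_n - H_{\delta_n}$ satisfy
\begin{equation*}
\left\{
\begin{array}{l}
\nabla \times W_n \;=\; i\omega \mup V_n \;-\; \omega \delta_n\, (\cF_* I)\, \hH_n, \\[3pt]
\nabla \times V_n \;=\; -i\omega \ep W_n \;+\; \omega \delta_n\, (\cF_* I)\, \hE_n \;+\; (\cF_* J - J),
\end{array}
\right. \quad \text{in } D,
\end{equation*}
with $W_n \times \nu = V_n \times \nu = 0$ on $\partial \Omega_2 \cap \partial D$; the vanishing trace holds because $\cF$ is the identity on $\partial \Omega_2$, so the tangential $H(\curl)$-traces of $\hE_n$ and $E_{\delta_n}$ agree on that portion of the boundary (and likewise for the magnetic fields). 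Since $\alpha < 1$, the remainders $\omega \delta_n (\cF_* I)(\hH_n, \hE_n)$ are of order $\delta_n^{1-\alpha} \to 0$ in $L^2(D)$, while the forcing $\cF_* J - J$ is fixed in $L^2(D)$.

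The principal step is to upgrade the above information to a uniform $L^2$-bound on $(W_n, V_n)$ in $D_1 = D \cap B_{\tau_0}(x_0)$, with $\tau_0 < r_0$ chosen small enough in terms of $D$ and $(\ep, \mup)|_D$. The zero tangential traces on $\partial \Omega_2 \cap B_{\tau_0}(x_0)$ permit extension of $(W_n, V_n)$ by zero across $\partial \Omega_2$ into $H(\curl)$-fields on the full ball $B_{\tau_0}(x_0)$; one then applies interior $L^2$-Caccioppoli-type estimates and a three-sphere inequality for the Maxwell operator with uniformly elliptic, piecewise $C^2$ coefficients, combined with the localized singularity removal technique of \cite{Ng-Negative-Cloaking, Ng-Superlensing}, to obtain the uniform bound. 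Once this is in hand, a weak subsequential limit $(W_\infty, V_\infty) \in [L^2(D_1)]^6$ exists and, passing to the limit in the equations, yields a solution of the system \eqref{pro1-g-S} in $D_1$ (up to an overall sign in the source), which contradicts the hypothesis. The hard part is precisely this uniform $L^2$-estimate: both $\hE_n$ and $E_{\delta_n}$ may individually blow up like $\delta_n^{-\alpha}$, so the argument must exploit genuine cancellation in their difference, relying on the vanishing trace on $\partial \Omega_2$ together with three-sphere inequalities for Maxwell's system in $D$.
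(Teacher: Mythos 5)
Your overall skeleton matches the paper's: argue by contradiction, reflect via $\cF$, form the difference field with vanishing tangential traces on $\partial\Omega_2\cap \partial D$ (so it extends by zero across the interface), and invoke a three-sphere inequality. But the step you yourself flag as ``the hard part'' --- a \emph{uniform} $L^2$ bound on $(W_n,V_n)$ in $D_1$ --- is precisely where the argument as written fails, and no amount of Caccioppoli estimates or appeal to the localized singularity removal technique fixes it. The difference field solves a Maxwell system whose source contains the \emph{fixed}, $O(1)$ term $J-\cF_*J$ in addition to the $O(\delta_n^{1-\alpha})$ terms. A three-sphere inequality of the form $\|u\|_{\mathrm{mid}}\le C(\|u\|_{\mathrm{small}}+\|\mathrm{src}\|)^{\beta}(\|u\|_{\mathrm{big}}+\|\mathrm{src}\|)^{1-\beta}$ then gives, even with the zero extension making the small-sphere norm vanish, only $\|(W_n,V_n)\|\le C\,O(1)^{\beta}\,O(\delta_n^{-\alpha})^{1-\beta}=O(\delta_n^{-\alpha(1-\beta)})$, which still blows up for every admissible $\beta<1$. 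So no weak limit is extracted and no contradiction is reached.

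The paper's resolution is a telescoping (Cauchy-sequence) device that you are missing. Take $\delta_n=2^{-n}$ and consider the \emph{increments} $(\cE_{2^{-(n+1)}}-\cE_{2^{-n}},\,\cH_{2^{-(n+1)}}-\cH_{2^{-n}})$ of the difference fields: in these increments the fixed source $J-\cF_*J$ cancels exactly, leaving only sources of size $O(2^{-n(1-\alpha)})$. One subtracts an auxiliary solution $(\bE_n,\bH_n)$ of a boundary-value problem carrying those small sources (so $\|(\bE_n,\bH_n)\|=O(2^{-n(1-\alpha)})$), obtaining a solution of the \emph{homogeneous} Maxwell system on the full ball $B_{r_0}$ after flattening the boundary. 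Applying the three-sphere inequality on spheres centered at a point $S$ in the reflected half-space --- where the extended field equals $-(\bE_n,\bH_n)$ and is therefore genuinely small --- and choosing the middle radius close enough to the inner one so that the exponent $\beta$ exceeds $(1+\alpha)/2>\alpha$, one gets a bound $O(2^{n(\alpha-\beta)})$ that decays geometrically. Summing shows the sequence is Cauchy in $L^2(D\cap B_{\tau_0}(x_0))$ (this choice of middle radius is what fixes $\tau_0$), and the limit solves \eqref{pro1-g-S} since the $\delta$-dependent sources are $O(\delta_n^{1-\alpha})\to 0$; this is the contradiction. Without this cancellation of the $O(1)$ source, your three-sphere step cannot close.
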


Applying Proposition~\ref{pro1} with $\alpha = 1/ 2$, one obtains conditions for the blow-up of the power. 
Here is a
more quantitative  result on the blow up of the power under some additional requirements on $(\eps_0, \mu_0)$: 

\begin{theorem} \label{thm3} Let $0< \delta< 1$, $0< \alpha < 1$, and  $J \in [L^2(\mR^3)]^3$ with compact support,   and let $(E_\delta, H_\delta) \in [H_{\loc}(\curl, \mR^3)]^2$ be the unique radiating  solution of \eqref{Main-eq-delta}. Assume that $(\eps_0, \mu_0)$ is a doubly complementary medium with  $\Omega_2 = B_{r_2}$ and $\Omega_3 = B_{r_3}$ for  some $0 < r_2 < r_3$,  $(\eps, \mu) = (\lambda I, \lambda I)$ in $B_{r_3} \setminus B_{r_2}$ for some $\lambda > 0$, and 
\begin{equation}\label{thm3-support}
\supp J  \cap \cF^{-1} \circ \cG^{-1}  (\Omega_3 \setminus \Omega_2)  = \emptyset.
\end{equation}
We have 
\begin{enumerate}

\item[(1)] If  there exists a solution  $(E, H) \in [H(\curl, B_{r_0} \setminus B_{r_2})]^2$  of 
\begin{equation}\label{thm3-sys1}
\left\{\begin{array}{cl}
\nabla \times E = i \lambda  \omega H \mbox{ in } B_{r_0} \setminus B_{r_2}, \\[6pt]
\nabla \times H = - i \lambda  \omega H + J - \cF_*J  \mbox{ in } B_{r_0} \setminus B_{r_2}, \\[6pt]
E \times \nu = H \times \nu = 0 \mbox{ on } \partial B_{r_2}, 
\end{array}\right.
\end{equation}
for some $r_0 > r_2^\alpha r_3^{1 - \alpha}$ and $\alpha \le 1/2$,  then 
\begin{equation}\label{thm3-A1}
\limsup_{\delta \to 0} \delta^{2 \alpha} \| (E_\delta, H_\delta) \|_{L^2(B_{r_3})}^2 < + \infty. 
\end{equation}

\item[(2)] If  there does {\bf not} exist a solution  $(E, H) \in [H(\curl, B_{r_0} \setminus B_{r_2})]^2$ 
of \eqref{thm3-sys1} for some $r_0 <  r_2^\alpha r_3^{1 - \alpha}$,   then 
\begin{equation}\label{thm3-A2}
\limsup_{\delta \to 0} \delta^{2 \alpha} \| (E_\delta, H_\delta) \|_{L^2(B_{r_3} \setminus B_{r_2})}^2= + \infty. 
\end{equation}
\end{enumerate}
\end{theorem}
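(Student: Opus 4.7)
The strategy is to exploit the spherical symmetry together with a Hadamard three-sphere inequality whose interpolation exponent $\log(r_3/\rho)/\log(r_3/r_2)$ realizes the value $\alpha$ exactly at the critical radius $\rho = r_2^{\alpha}r_3^{1-\alpha}$. In this geometry, the complementary maps of Definition~\ref{def-Geo} may be taken to be the Kelvin inversions $\cF(x)=r_2^2 x/|x|^2$, which carries $B_{r_2}\setminus B_{r_2^2/r_3}$ onto $B_{r_3}\setminus B_{r_2}$ with $\cF=\mathrm{id}$ on $\partial B_{r_2}$, together with $\cG(x)=r_3^2 x/|x|^2$. Both are conformal, so $\cF_* I$ and $\cG_* I$ are scalar multiples of $I$; the homogeneous isotropic character of the shell is preserved after reflection, and each Cartesian component of the reflected field satisfies a scalar Helmholtz equation in the shell.

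First I would set $(\hE_\delta,\hH_\delta):=(\cF_* E_\delta,\cF_* H_\delta)$ in $B_{r_3}\setminus B_{r_2}$. By the change of variables identity together with the complementary identity $(\cF_*\en,\cF_*\mun)=(\lambda I,\lambda I)$, this pair solves the Maxwell system in $B_{r_3}\setminus B_{r_2}$ with the same leading coefficients as $(E_\delta,H_\delta)$, source $-\cF_* J$, and tangential traces on $\partial B_{r_2}$ equal to those of $(E_\delta,H_\delta)$. Consequently $V_\delta := (E_\delta-\hE_\delta,\ H_\delta-\hH_\delta)$ satisfies a constant-coefficient Maxwell system in $B_{r_3}\setminus B_{r_2}$ with source $J-\cF_* J + O(\delta)$ and with vanishing tangential Cauchy data on $\partial B_{r_2}$. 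By assumption \eqref{thm3-support} the support $\supp J \cup \supp \cF_* J$ is disjoint from the shell, so Hadamard's three-sphere inequality for the scalar Helmholtz equation yields, for $r_2<\rho<r_3$,
\[
\|V_\delta\|_{L^2(\partial B_\rho)} \le C\,\|V_\delta\|_{L^2(\partial B_{r_2})}^{\beta(\rho)}\,\|V_\delta\|_{L^2(\partial B_{r_3})}^{1-\beta(\rho)},
\]
with $\beta(\rho)=\log(r_3/\rho)/\log(r_3/r_2)$, so that $\beta(r_2^\alpha r_3^{1-\alpha})=\alpha$. Theorem~\ref{thm-main} supplies a uniform bound on the outer factor $\|V_\delta\|_{L^2(\partial B_{r_3})}$, while the lossy term combined with the vanishing tangential Cauchy data gives $\|V_\delta\|_{L^2(\partial B_{r_2})} \lesssim \delta\,\|(E_\delta,H_\delta)\|_{L^2(B_{r_2}\setminus B_{r_2^2/r_3})}$.

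For Part~(1), given a solution $(E,H)$ of \eqref{thm3-sys1} in $B_{r_0}\setminus B_{r_2}$ with $r_0>r_2^\alpha r_3^{1-\alpha}$, I would apply the localized singularity removal technique of \cite{Ng-Negative-Cloaking,Ng-Superlensing} by subtracting a smooth cut-off of $(E,H)$ from the relevant field, eliminating the source $J-\cF_* J$ on a neighborhood of $\partial B_\rho$ with $\rho=r_2^\alpha r_3^{1-\alpha}$, which lies strictly inside $B_{r_0}$ by hypothesis (the condition $\alpha\le 1/2$ keeping us in the single-reflection regime). Plugging the bounds on the boundary factors into the three-sphere estimate at $\rho$, then transporting the shell norm back to the core $B_{r_2^2/r_3}$ via $\cF$ and combining with Proposition~\ref{pro2} outside $B_{r_3}$, a bootstrap gives $\|(E_\delta,H_\delta)\|_{L^2(B_{r_3})}\le C\delta^{-\alpha}$, which is \eqref{thm3-A1}. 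For Part~(2) I would argue by contradiction: if $\delta_n^{2\alpha}\|(E_{\delta_n},H_{\delta_n})\|^2_{L^2(B_{r_3}\setminus B_{r_2})}$ were bounded along some $\delta_n\to 0$, rescaling by $\delta_n^\alpha$ and passing to a weak limit would produce, after reflection through $\cF$ and subtraction of the normalized reflected field, a solution of \eqref{thm3-sys1} on an annulus $B_{r_0}\setminus B_{r_2}$ with $r_0\ge r_2^\alpha r_3^{1-\alpha}$, contradicting the non-existence hypothesis in Part~(2).

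The main technical obstacle is converting the vanishing of the tangential Cauchy data of $V_\delta$ on $\partial B_{r_2}$ into an $L^2(\partial B_{r_2})$-bound of genuine order $\delta$, since the three-sphere inequality bounds full spherical $L^2$-norms rather than Cauchy data. This forces one to use the Maxwell equations themselves, turning the mismatch of normal traces on $\partial B_{r_2}$ into a defect of size $\delta$ via the $i\delta I$ perturbation, through a trace inequality for $H(\curl)$ on a thin collar near $\partial B_{r_2}$; ensuring that this defect scales correctly (and that the $\alpha\le 1/2$ hypothesis is exactly what prevents the reflected remainder from growing faster than $\delta^{-\alpha}$) is the delicate core of the argument.
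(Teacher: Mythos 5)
There are several concrete gaps. First, you have misread the support hypothesis \eqref{thm3-support}: it only excludes $\supp J$ from the set $\cF^{-1}\circ\cG^{-1}(\Omega_3\setminus\Omega_2)$, which lies inside $\Omega_1$; it does \emph{not} make $J$ or $\cF_*J$ vanish in the shell $B_{r_3}\setminus B_{r_2}$. The source $J-\cF_*J$ appearing in \eqref{thm3-sys1} is generically nonzero there --- if it vanished, the Cauchy problem would always admit the trivial solution and Part~(2) would be vacuous --- so your claim that $V_\delta$ solves a source-free Helmholtz system in the shell, and hence that the classical Hadamard inequality applies directly, is false. Second, for Part~(1) the three-sphere inequality is an interpolation estimate and cannot by itself produce an \emph{upper} bound of order $\delta^{-\alpha}$ on $\|(E_\delta,H_\delta)\|_{L^2(B_{r_3})}$; nor can a spatial cut-off of the Cauchy solution $(E,H)$, which generically fails to extend past $B_{r_0}$ with finite $L^2$ norm. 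The paper's proof of Part~(1) rests on an explicit construction you are missing: expand $(\cE,\cH)=(E-\bE,H-\bH)$ in vector spherical harmonics and damp the $n$-th mode by $1/(1+\xi_n)$ with $\xi_n=\delta^{\alpha}(r_3/r_0)^n$. This produces a field on all of $B_{r_3}\setminus B_{r_2}$ of size $\delta^{-\alpha}$ whose tangential trace on $\partial B_{r_2}$ is perturbed only by $O(\delta^{1-\alpha})$ (here $r_0\ge r_2^\alpha r_3^{1-\alpha}$ and $\alpha\le 1/2$ enter); Lemma~\ref{lem-stability}, which loses a factor $\delta^{-1}$, then controls the remaining corrector by $\delta^{-\alpha}$. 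Without this spectral damping there is no identifiable mechanism in your argument generating the exponent $\alpha$ in \eqref{thm3-A1}.

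Third, your bounds on the two factors of the three-sphere inequality do not hold as stated. Theorem~\ref{thm-main} controls the fields only \emph{outside} $\Omega_3=B_{r_3}$; near $\partial B_{r_3}$ the quantity $V_\delta$ involves $E_\delta$ in the shell and the reflection of $E_\delta$ from a neighborhood of $\partial\Omega_1$, both of which may blow up like $\delta^{-\alpha}$, so the outer factor is not uniformly bounded (in the paper it is bounded by $C\,2^{n\alpha}$ using the contradiction hypothesis, not by Theorem~\ref{thm-main}). For the inner factor, vanishing \emph{tangential} Cauchy data on $\partial B_{r_2}$ does not control the normal components, and the surface estimate $\|V_\delta\|_{L^2(\partial B_{r_2})}\lesssim\delta\,\|(E_\delta,H_\delta)\|$ that you flag as the ``delicate core'' is a self-inflicted obstacle: the paper avoids it by extending $V_\delta$ by zero into $B_{r_2}$ (legitimate in $H(\curl)$ precisely because both tangential traces match), applying the volume three-sphere inequality of Lemma~\ref{lem-3spheres-ball} to \emph{consecutive differences} $V_{2^{-(n+1)}}-V_{2^{-n}}$ minus a corrector of size $2^{-n(1-\alpha)}$, so that the inner ball contributes only the corrector. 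Since the interpolation exponent $\beta$ at radius $\hat r_0<r_2^\alpha r_3^{1-\alpha}$ exceeds $\alpha$, the differences decay geometrically, the sequence is Cauchy without any rescaling, and the limit solves \eqref{thm3-sys1} on $B_{\hat r_0}\setminus B_{r_2}$ with the source intact. Your version --- rescaling by $\delta_n^{\alpha}$ and passing to a weak limit --- sends the source $\delta_n^{\alpha}(J-\cF_*J)$ to zero, so the limit solves only the homogeneous Cauchy problem, which always has the trivial solution and yields no contradiction.
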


Applying Theorem~\ref{thm3} with $\alpha  = 1/2$, one derives, in the setting of Theorem~\ref{thm3},  that there exists a critical length $r_* = \sqrt{r_2 r_3}$ such that the existence or non-existence of a solution of the Cauchy problem \eqref{thm3-sys1} implies the blow up of the power for a sequence of $\delta_n \to 0$.

\medskip 
The paper is organized as follows. In Section~\ref{sect-pre}, we collect and establish several results  used in the proof of Theorem~\ref{thm-main} and Propositions~\ref{pro2} and  \ref{pro1}. The proof of Theorem~\ref{thm-main} and Propositions~\ref{pro2} and \ref{pro1} are given in Section~\ref{sect-4}. The proof of Theorem~\ref{thm3} is  given in Section \ref{sect-6}.

%
%
%
%
%
%
%
%
%
%
%


\section{Preliminaries} \label{sect-pre}

In this section, we collect and establish several facts on Maxwell equations that  are used in the proof of the main results. Let $\Omega$ be an open, connected, bounded   subset of $\mR^3$ of class $C^1$,  and  set $\Gamma = \partial \Omega$. Here and in what follows,  one  denotes 
\begin{equation*}
H^{-1/2}(\dive_\Gamma, \Gamma): = \Big\{ \phi \in [H^{-1/2}(\Gamma)]^3; \; \phi \cdot \nu = 0 \mbox{ and } \dive_\Gamma \phi \in H^{-1/2}(\Gamma) \Big\},
\end{equation*}
\begin{equation*}
\| \phi\|_{H^{-1/2}(\dive_\Gamma, \Gamma)} : = \| \phi\|_{H^{-1/2}(\Gamma)} +  \| \dive_\Gamma \phi\|_{H^{-1/2}(\Gamma)}.
\end{equation*}

We now provide a stability result for  \eqref{Main-eq-delta}.

\begin{lemma} \label{lem-stability} Let $0< \delta < 1$,  and let $(\eps_\delta, \mu_\delta)$ be defined in \eqref{def-eDelta}. Given 
$(J_e, J_m) \in [L^2(\mR^3)]^2$ with compact support in $K \subset B_{R_0}$, let $(E_\delta, H_\delta) \in [H_{\loc}(\curl, \mR^3)]^6$ be the unique radiating  solution of 
\begin{equation*}\left\{
\begin{array}{cl}
\nabla \times E_\delta = i \omega \mu_\delta H_\delta + J_e & \mbox{ in } \mR^3,\\[6pt]
\nabla \times H_\delta = -  i \omega \eps_\delta H_\delta + J_m & \mbox{ in } \mR^3.
\end{array}\right.
\end{equation*}
We have, for $R>0$,  
\begin{equation}\label{lem-stability-p1}
\|(E_\delta, H_\delta) \|_{L^2(B_R)}^2 \le C_R \Big( \delta^{-1} \| (J_e, J_m)\|_{L^2(\mR^3)} \| (E_\delta, H_\delta)\|_{L^2(K)} + \| (J_e, J_m)\|_{L^2(\mR^3)} \Big). 
\end{equation}
In particular, the following inequality holds:
\begin{equation}\label{lem-stability-p2}
\|(E_\delta, H_\delta) \|_{L^2(B_R)} \le C_R \delta^{-1} \| (J_e, J_m)\|_{L^2(\mR^3)}. 
\end{equation}
Here $C_R$ denotes a positive constant that  depends on $R$ but is  independent of $(J_e, J_m)$ and $\delta$.
\end{lemma}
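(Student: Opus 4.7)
The plan is to combine a power (energy) identity---exploiting the $i\delta I$ imaginary part of the coefficients inside the shell $\Omega_2 \setminus \Omega_1$---with a standard stability estimate for the Maxwell system outside the shell, where the coefficients $(\ep, \mup)$ are real and uniformly elliptic. A final application of Young's inequality will then yield \eqref{lem-stability-p2} from \eqref{lem-stability-p1}. For the energy identity, multiply the first equation by $\bar H_\delta$ and the complex conjugate of the second by $E_\delta$, integrate over $B_R$ for large $R$, and subtract using $\nabla \cdot (E \times \bar H) = \bar H \cdot \nabla \times E - E \cdot \nabla \times \bar H$. Taking real parts and using $\mathrm{Im}(\mu_\delta) = \mathrm{Im}(\eps_\delta) = \delta \mathds{1}_{\Omega_2 \setminus \Omega_1} I$ gives
\[
\omega \delta \int_{\Omega_2 \setminus \Omega_1}(|E_\delta|^2 + |H_\delta|^2) + \mathrm{Re}\int_{\partial B_R}(E_\delta \times \bar H_\delta) \cdot \nu \, dS = \mathrm{Re}\int_K (\bar H_\delta \cdot J_e - E_\delta \cdot \bar J_m)\, dx.
\]
The Silver--Müller radiation condition renders the Poynting flux on the left non-negative to leading order as $R \to \infty$, so Cauchy--Schwarz yields
\[
\omega \delta \, \|(E_\delta, H_\delta)\|_{L^2(\Omega_2 \setminus \Omega_1)}^2 \le \|(J_e, J_m)\|_{L^2(\mR^3)} \, \|(E_\delta, H_\delta)\|_{L^2(K)}. \qquad (\star)
\]

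Next, since $(\eps_\delta, \mu_\delta) = (\ep, \mup)$ is real and uniformly elliptic on $\mR^3 \setminus (\Omega_2 \setminus \Omega_1)$, a standard stability estimate for the Maxwell system on this set---with Silver--Müller at infinity and tangential traces prescribed on $\partial(\Omega_2 \setminus \Omega_1)$---yields
\[
\|(E_\delta, H_\delta)\|_{L^2(B_R \setminus (\Omega_2 \setminus \Omega_1))} \le C_R \Bigl( \|(J_e, J_m)\|_{L^2(\mR^3)} + \|(E_\delta, H_\delta)\|_{H(\curl, \Omega_2 \setminus \Omega_1)} \Bigr),
\]
via continuity of the trace map $H(\curl, \Omega_2 \setminus \Omega_1) \to H^{-1/2}(\dive_\Gamma)$ on each component of the shell boundary. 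The shell Maxwell equations together with the uniform boundedness of $\mu_\delta, \eps_\delta$ in $\delta$ give $\|(E_\delta, H_\delta)\|_{H(\curl, \Omega_2 \setminus \Omega_1)} \le C\bigl(\|(E_\delta, H_\delta)\|_{L^2(\Omega_2 \setminus \Omega_1)} + \|(J_e, J_m)\|_{L^2}\bigr)$. Splitting $B_R$ into shell and complement, squaring, and inserting $(\star)$ yields \eqref{lem-stability-p1}.

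For \eqref{lem-stability-p2}, setting $a = \|(E_\delta, H_\delta)\|_{L^2(B_R)}$ and $b = \|(J_e, J_m)\|_{L^2}$, and noting $\|(E_\delta, H_\delta)\|_{L^2(K)} \le a$ for $R \ge R_0$, the previous bound becomes $a^2 \le C_R(\delta^{-1} b a + b^2)$; Young's inequality absorbs the $a$-term, giving $a \le C_R' \delta^{-1} b$ for $\delta \le 1$. The main obstacle lies in the exterior stability estimate: the Maxwell problem on $\mR^3 \setminus (\Omega_2 \setminus \bar \Omega_1)$ with real positive coefficients can \emph{a priori} fail to be well-posed at interior PEC eigenvalues of $\Omega_1$, so one must appeal to Fredholm theory combined with injectivity of the full operator on $\mR^3$ for $\delta > 0$ (furnished by $(\star)$), with the $\omega$-dependence being absorbed into $C_R$. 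Some care is also required when passing $R \to \infty$ in the Poynting flux, which is $O(1)$; its non-negativity to leading order from the radiation condition is precisely what $(\star)$ needs.
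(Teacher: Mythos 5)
Your overall architecture coincides with the paper's: the same energy (Poynting) identity yields the shell estimate $(\star)$ with the $\delta^{-1}$ rate, the trace theorem transfers this to the tangential traces on $\partial\Omega_1\cup\partial\Omega_2$, a stability estimate propagates the bound to the complement of the shell, and Young's inequality converts \eqref{lem-stability-p1} into \eqref{lem-stability-p2}. The energy identity, the $H(\curl)$ bound in the shell, and the final absorption step are all fine.

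The gap is in how you handle the core $\Omega_1$. The region $\mR^3\setminus(\Omega_2\setminus\bar\Omega_1)$ is disconnected, and on the bounded component $\Omega_1$ the boundary value problem with only one tangential trace (say $E_\delta\times\nu$) prescribed on $\partial\Omega_1$ genuinely loses uniqueness when $\omega^2$ is a Maxwell cavity eigenvalue of $(\ep,\mup)$ in $\Omega_1$ --- you correctly identify this --- but your proposed remedy does not repair it. The cavity problem in $\Omega_1$ involves only the $\delta$-independent coefficients $(\ep,\mup)$, so injectivity of the full operator on $\mR^3$ for $\delta>0$ is a statement about a different operator and transfers nothing to the cavity problem; and even if one ran a Fredholm argument on the full $\delta$-problem, the resulting constant would depend on $\delta$ in an uncontrolled way, which defeats the purpose of the lemma. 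The correct fix, and the one the paper uses, is to observe that the trace theorem controls \emph{both} tangential traces $E_\delta\times\nu$ and $H_\delta\times\nu$ on $\partial\Omega_1$ by the $H(\curl)$ norm in the shell; with full Cauchy data on $\partial\Omega_1$ the interior problem is overdetermined, uniqueness holds by the unique continuation principle regardless of eigenvalues, and the quantitative Lipschitz estimate
\begin{equation*}
\| (E_\delta, H_\delta) \|_{L^2(\Omega_1)}  \le  C \Big( \| (E_\delta \times \nu, H_\delta \times \nu) \|_{H^{-1/2}(\dive_\Gamma, \partial \Omega_1)} + \|(J_e, J_m) \|_{L^2(\Omega_1)} \Big)
\end{equation*}
is exactly \cite[Lemma 3]{Ng-Superlensing-Maxwell}. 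On the unbounded component $\mR^3\setminus\bar\Omega_2$ your argument is sound as stated: the radiation condition rules out interior resonances, and the exterior stability estimate with $E_\delta\times\nu$ prescribed on $\partial\Omega_2$ is \cite[Lemma 5]{Ng-Superlensing-Maxwell}. With the $\Omega_1$ step replaced by the Cauchy-data/unique-continuation estimate, your proof closes.
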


\begin{proof} The proof of this lemma is quite simple. An integration by parts gives 
\begin{equation*}
\int_{B_R} \langle \mu_\delta^{-1} \nabla \times E_\delta, \nabla \times E_\delta \rangle - \omega^2 \int_{B_R} \langle \eps_\delta E_\delta, E_\delta \rangle = \int_{B_R} \mu_\delta^{-1} J_e \nabla \times \bar E_\delta + i \omega \int_{B_R} J_m \bar E_\delta  + i \omega \int_{\partial B_R} \nu \times \bar E_\delta H_\delta. 
\end{equation*}
Letting $R \to + \infty$, using the radiation condition, and considering the imaginary part, we have 
\begin{equation}\label{lem-stability-1}
\| (E_\delta, H_\delta) \|_{L^2(\Omega_2 \setminus \Omega_1)}^2  \le \frac{C}{\delta} \|(J_e, J_m) \|_{L^2(\mR^3)} \| (E_\delta, H_\delta) \|_{L^2(K)}.  
\end{equation}
By the trace theory, see, e.g.,  \cite{AV, BC}, 
$$
\| (E_\delta \times \nu, H_\delta \times \nu) \|_{H^{-1/2}(\dive_\Gamma, \partial \Omega_2 \cup \partial \Omega_1)} \le C 
\| (E_\delta, H_\delta) \|_{H(\curl, \Omega_2 \setminus \Omega_1)}. 
$$
It follows that 
\begin{equation}\label{lem-stability-2}
\| (E_\delta \times \nu, H_\delta \times \nu) \|_{H^{-1/2}(\dive_\Gamma, \partial \Omega_2 \cup \partial \Omega_1)}^2  \le \frac{C}{\delta} \|(J_e, J_m) \|_{L^2(\mR^3)} \| (E_\delta, H_\delta) \|_{L^2(K)}
\end{equation}
for some positive constant $C$ independent of $(J_e, J_m)$ and $\delta$.
As a consequence of  the unique continuation principle for Maxwell's equations, see, e.g., \cite[Lemma 3]{Ng-Superlensing-Maxwell}, one has   
\begin{equation}\label{lem-stability-3}
\| (E_\delta, H_\delta) \|_{L^2(\Omega_1)}  \le  C \Big( \| (E_\delta \times \nu, H_\delta \times \nu) \|_{H^{-1/2}(\dive_\Gamma, \partial \Omega_1)} + \|(J_e, J_m) \|_{L^2(\Omega_1)} \Big),
\end{equation}
and by the stability of the exterior problem, see, e.g., \cite[Lemma 5]{Ng-Superlensing-Maxwell}, we obtain 
\begin{equation}\label{lem-stability-4}
\| (E_\delta, H_\delta) \|_{L^2(B_R \setminus \Omega_2)}  \le  C_R \Big( \| E_\delta \times \nu \|_{H^{-1/2}(\dive_\Gamma, \partial \Omega_2)} + \|(J_e, J_m) \|_{L^2(B_R \setminus \Omega_2)} \Big)
\end{equation}
for some positive constants $C$ and $C_R$ independent of $(J_e, J_m)$ and $\delta$. 

Assertion~\eqref{lem-stability-p1} now follows from \eqref{lem-stability-1}, \eqref{lem-stability-2}, \eqref{lem-stability-3}, and \eqref{lem-stability-4}. 

Assertion~\eqref{lem-stability-p2} is a direct consequence of Assertion~\eqref{lem-stability-p1} and H\"older's inequality. 
\end{proof}

It is known that the Maxwell equations can be reduced to weakly coupled second order elliptic
equations. More precisely, let $\Omega$ be an open subset of $\mR^3$. If $(E, H) \in H^1(\Omega)$ and 
\begin{equation*}
\left\{\begin{array}{cl}
\nabla \times E = i  \omega \mu H & \mbox{ in } \Omega, \\[6pt]
\nabla \times H = - i \omega \eps E & \mbox{ in } \Omega, 
\end{array}\right.
\end{equation*}
then, for $1 \le a \le 3$,  
\begin{equation}\label{eq-H}
\dive (\mu \nabla \cH_a)  + \dive (\partial_a \mu \cH - i k \mu \epsilon^a \eps \cE) = 0 \mbox{ in } \Omega, 
\end{equation}
\begin{equation}\label{eq-E}
\dive(\eps \nabla \cE_a) + \dive (\partial_a \eps \cE + i k \eps \epsilon^a \mu \cH) = 0 \mbox{ in } \Omega. 
\end{equation}
Here $\cE_a$ and $\cH_a$  denote the $a$ component of $\cE$ and $\cH$,  respectively, and the $bc$ component $\epsilon^a_{bc}$ $(1 \le b, c \le 3)$  of $\epsilon^a$ $(1 \le a \le 3)$ denotes the usual Levi Civita permutation, i.e., 
\begin{equation*}
\epsilon^a_{bc} = \left\{\begin{array}{cl} \mbox{sign } (abc) & \mbox{ if  $abc$  is a permuation}, \\[6pt]
0 & \mbox{otherwise}. 
\end{array}\right.
\end{equation*}

Using the three-sphere inequality for elliptic systems, see,  e.g., \cite{ARRV}, we have 
 
\begin{lemma} \label{lem-3spheres} Let $\Omega$ be an open, connected, bounded   subset of $\mR^3$ of class $C^1$, and let $\eps, \, \mu$ be two elliptic, symmetric, matrix-valued functions defined in $\Omega$ of class $C^2$. Assume that 
\begin{equation*}
\left\{\begin{array}{cl}
\nabla \times E = i  \omega \mu H + J_{e} & \mbox{ in } \Omega, \\[6pt]
\nabla \times H = - i \omega \eps E + J_{m} & \mbox{ in } \Omega. 
\end{array}\right.
\end{equation*}
Let $K \Subset \Omega$ and $B_{r_0}(x_0) \subset \Omega$ for some $x_0 \in \Omega$ and $r_0 > 0$. We have 
\begin{equation*}
\| (\cE, \cH)\|_{L^2(K)} \le C \Big( \| (\cE, \cH)\|_{L^2(B_{r_0}(x_0))} + \| (J_{e}, J_{m})\|_{L^2(\Omega)} \Big)^\alpha \Big( \| (\cE, \cH)\|_{L^2(\Omega)} +  \| (J_{e}, J_{m})\|_{L^2(\Omega)}\Big)^{1 - \alpha} 
\end{equation*}
for some $0< \alpha < 1 $ and for some $C>0$ independent of $(E, H)$ and $(J_e, J_m)$. 
\end{lemma}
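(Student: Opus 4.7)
The plan is to reduce the first-order Maxwell system with sources to a coupled second-order strongly elliptic system for the six-component field $u = (\cE, \cH)$, then invoke the three-sphere inequality for elliptic systems from \cite{ARRV}, and finally propagate from concentric balls to the compact set $K$ via a chain-of-balls argument.

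First I would extend the source-free reduction \eqref{eq-H}--\eqref{eq-E} to the present setting. Taking divergences of the two Maxwell equations yields $\dive(\eps \cE) = (i\omega)^{-1}\dive J_m$ and $\dive(\mu \cH) = -(i\omega)^{-1}\dive J_e$. Combining these with the identity $\curl\curl = \grad\dive - \Delta$ applied componentwise (and exploiting that $\eps,\mu \in C^2$ are uniformly elliptic), one obtains, for each $1 \le a \le 3$,
\begin{equation*}
\dive(\mu \nabla \cH_a) + \dive\bigl(\partial_a \mu \cH - i\omega \mu \epsilon^a \eps \cE\bigr) = \dive(\mu \epsilon^a J_m) + \text{lower order in }(J_e,J_m),
\end{equation*}
and the analogous equation for $\cE_a$. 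This is a Douglis--Nirenberg type second-order strongly elliptic system with Lipschitz leading coefficients and right-hand side entirely in divergence form with $L^2$ data.

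Next I would apply the three-sphere inequality for such systems \cite{ARRV}: for concentric balls $B_{\rho_1}(y)\subset B_{\rho_2}(y)\subset B_{\rho_3}(y)\Subset \Omega$ with prescribed ratios, there exist $C>0$ and $\alpha' \in (0,1)$ depending only on those ratios and on $\eps,\mu$ such that
\begin{equation*}
\|u\|_{L^2(B_{\rho_2}(y))} \le C \bigl(\|u\|_{L^2(B_{\rho_1}(y))} + \|(J_e,J_m)\|_{L^2(\Omega)}\bigr)^{\alpha'} \bigl(\|u\|_{L^2(B_{\rho_3}(y))} + \|(J_e,J_m)\|_{L^2(\Omega)}\bigr)^{1-\alpha'}.
\end{equation*}
To pass from concentric balls centered at $x_0$ to the arbitrary compact set $K$, I would use a standard Harnack-type chain: since $\Omega$ is open and connected and $K \cup \overline{B_{r_0}(x_0)} \Subset \Omega$, cover this set by a finite family of overlapping balls whose closures lie in $\Omega$, and iterate the three-sphere inequality along the chain. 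Composing the exponents produces some $\alpha \in (0,1)$ depending on $K, x_0, r_0, \Omega$ but independent of $(E,H)$ and $(J_e,J_m)$, which delivers the stated bound.

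The hard part is step one: a naive application of $\curl\curl$ to the Maxwell system produces $\curl J_e$ and $\grad\dive J_m$ on the right-hand side, which are only in $H^{-1}$ when $J_e,J_m \in L^2$. The trick is to use the divergence constraints above to rewrite every source contribution in the form $\dive(A(x)J_\bullet)$ or $B(x) J_\bullet$ with $L^\infty$ matrices $A,B$; in the weak formulation underlying \cite{ARRV} such right-hand sides are admissible, and all source terms are controlled by $\|(J_e,J_m)\|_{L^2}$ alone. Once this is arranged, the remainder of the argument is routine.
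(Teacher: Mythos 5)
Your overall architecture --- reduce the first-order system to the weakly coupled second-order elliptic system \eqref{eq-H}--\eqref{eq-E}, invoke the three-sphere inequality of \cite{ARRV}, and propagate from $B_{r_0}(x_0)$ to $K$ by a chain of overlapping balls with composed exponents --- is exactly the route the paper intends (the paper records the reduction only in the source-free case and then cites \cite{ARRV} without further detail). The chain-of-balls step is fine.

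The genuine problem sits in what you yourself single out as the hard part. Carrying the reduction out \emph{with} sources, the term $\partial_b\partial_a(\mu_{bc}\cH_c)=\partial_a\dive(\mu\cH)$ appears, and the divergence constraint $\dive(\mu\cH)=-(i\omega)^{-1}\dive J_e$ turns it into $-(i\omega)^{-1}\partial_a\dive J_e$ (likewise $(i\omega)^{-1}\partial_a\dive J_m$ in the $\cE_a$-equation). The constraint does not remove this contribution --- it is precisely what produces it, and it is needed to make the principal part elliptic. This is a \emph{second}-order distributional derivative of an $L^2$ field; it cannot be recast as $\dive(A J_e)+BJ_e$ with $A,B\in L^\infty$, so the right-hand side is not ``entirely in divergence form with $L^2$ data'' and the weak formulation underlying \cite{ARRV} does not absorb it as claimed. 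Taking $J_e=0$ does not rescue the argument in the paper's application, since there $J_m=J\in L^2$ with no control on $\dive J$. The standard repair --- the one the paper itself uses in the analogous Lemma~\ref{lem-3spheres-ball} --- is to first produce a particular solution $(\bE,\bH)$ of the inhomogeneous Maxwell system (e.g.\ in $\Omega$ with the impedance condition $(\bE\times\nu)\times\nu+\bH\times\nu=0$, or on a slightly larger ball) satisfying $\|(\bE,\bH)\|_{H(\curl,\Omega)}\le C\|(J_e,J_m)\|_{L^2(\Omega)}$, subtract it, apply \eqref{eq-H}--\eqref{eq-E} and \cite{ARRV} to the source-free difference, and reinstate $(\bE,\bH)$ at the end; this subtraction is exactly where the $\|(J_e,J_m)\|_{L^2(\Omega)}$ terms in the stated inequality come from. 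With that modification your proof goes through.
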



We end this section by recalling the following known change-of-variable  formula for  the Maxwell equations. 
\begin{lemma} \label{lem-TO}
Let $\Omega, \Omega'$ be two bounded, connected, open subsets of $\mR^3$ and ${\mathcal T}: \Omega \to \Omega'$ be  bijective such that 
${\mathcal T} \in C^1(\bar \Omega)$ and ${\mathcal T}^{-1} \in C^1(\bar \Omega')$. Assume that $\eps, \mu \in [L^\infty(\Omega)]^{3 \times 3}$,  $j \in [L^2(\Omega)]^3$,  and $(E, H) \in [H(\curl, \Omega)]^2$ is a solution to 
\begin{equation*}\left\{
\begin{array}{lll}
\nabla \times E  &=  i \omega \mu H & \mbox{ in } \Omega, \\[6pt]
\nabla \times H &=  - i \omega \eps  E + j & \mbox{ in } \Omega. 
\end{array}\right. 
\end{equation*}
Define $(E', H')$ in $\Omega'$ as follows:
\begin{equation}\label{TEH}
E'(x') = {\mathcal T}*E(x'): = \nabla {\mathcal T}^{-T}(x) E(x) \mbox{  and  } H'(x') = {\mathcal T}*H(x'): = \nabla {\mathcal T}^{-T}(x) H(x),  
\end{equation}
with $x' = {\mathcal T}(x)$.   Then $(E', H')$ is a solution to 
\begin{equation}\label{lem-TO-1}
\left\{
\begin{array}{lll}
\nabla' \times E'  &=  i \omega \mu' H' & \mbox{ in } \Omega', \\[6pt] 
\nabla' \times H' &=  - i \omega \eps'  E' + j' & \mbox{ in } \Omega', 
\end{array}\right. 
\end{equation}
where 
\begin{equation*}
\eps' = {\mathcal T}_*\eps, \quad \mu' = {\mathcal T}_*\mu, \quad j'= T_*j.
\end{equation*}
Additionally  assume that  $\Omega$ is of class $C^1$ and ${\bf T} = {\mathcal T} \big|_{\partial \Omega}: \partial \Omega \to \partial \Omega'$ is a diffeomorphism. We have \footnote{Here $\nu$ and $\nu'$ denote the outward unit normal  vector on $\partial \Omega$ and $\partial \Omega'$.} 
\begin{equation}\label{lem-TO-2}
\mbox{if } E \times \nu = g  \mbox{ and } H \times \nu = h \mbox{ on } \partial \Omega, \mbox{ then } E' \times \nu' = {\bf T}_*g \mbox{ and } H' \times \nu' = {\bf T}_* h  \mbox{ on } \partial \Omega', 
\end{equation}
where ${\bf T}_*$ is given in \eqref{def-T} below. 
\end{lemma}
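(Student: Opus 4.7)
The plan is to prove Lemma~\ref{lem-TO} by a direct change-of-variables computation. The central ingredient is the classical transformation rule for the curl of a covariantly pulled-back vector field: if $v'(y) := \nabla\mathcal{T}^{-T}(x)\,v(x)$ with $y = \mathcal{T}(x)$, then
\begin{equation*}
(\nabla' \times v')(y) \;=\; \frac{1}{\det\nabla\mathcal{T}(x)}\, \nabla\mathcal{T}(x)\,(\nabla \times v)(x).
\end{equation*}
This identity expresses, in coordinates, the commutation of exterior differentiation with pullback of $1$-forms under a diffeomorphism. I would verify it directly: differentiating the components $v'_m(y) = (\partial x^j/\partial y^m)\,v_j(x)$ in $y^l$, the Hessian term $(\partial^2 x^j/\partial y^l\partial y^m)\,v_j$ is symmetric in $(l,m)$ and hence annihilated under contraction with the Levi-Civita symbol $\epsilon_{klm}$; the remaining term collapses, via the cofactor identity $\epsilon_{klm}(\nabla\mathcal{T}^{-1})_{pl}(\nabla\mathcal{T}^{-1})_{jm} = (\det\nabla\mathcal{T})^{-1}(\nabla\mathcal{T})_{kr}\,\epsilon_{rpj}$, to produce the claimed formula.

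With the curl rule in hand, I apply it to each of $E$ and $H$ and transform the right-hand sides of the Maxwell system. Using $\mathcal{T}_*\mu = \nabla\mathcal{T}\,\mu\,\nabla\mathcal{T}^T / \det\nabla\mathcal{T}$ together with $H'= \nabla\mathcal{T}^{-T}H$, one computes
\begin{equation*}
\mathcal{T}_*\mu(y)\, H'(y) \;=\; \frac{\nabla\mathcal{T}\,\mu\,\nabla\mathcal{T}^T\,\nabla\mathcal{T}^{-T}H}{\det\nabla\mathcal{T}}(x) \;=\; \frac{\nabla\mathcal{T}(x)\,\mu(x)\, H(x)}{\det\nabla\mathcal{T}(x)},
\end{equation*}
and analogously $\mathcal{T}_*\eps(y)\,E'(y) = \nabla\mathcal{T}(x)\,\eps(x)\,E(x)/\det\nabla\mathcal{T}(x)$. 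Combining with the curl identity and the definition of the pushforward of the source yields \eqref{lem-TO-1} pointwise almost everywhere for smooth fields. For the genuine $H(\curl, \Omega)$ case, I would regularize $(E, H)$ on compact subsets by mollification, apply the identity to the smooth approximants, and pass to the limit in $[L^2_{\loc}(\Omega')]^6$, using the $C^1$-regularity of $\mathcal{T}$ and $\mathcal{T}^{-1}$ to control the pushforwards.

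For the boundary statement \eqref{lem-TO-2}, I first treat fields smooth up to $\partial\Omega$. At a point $x\in\partial\Omega$, decompose $E(x) = E_\tau(x) + (E(x)\cdot\nu)\nu$; the tangential trace $E\times\nu$ depends only on $E_\tau$. The map $\nabla\mathcal{T}^{-T}(x)$ restricts, on the tangent plane of $\partial\Omega$ at $x$, to the covariant action of the boundary diffeomorphism $\mathbf{T}$ on tangent $1$-forms. Combining this restriction with the Jacobian of $\mathbf{T}$ arising from the rotation by $\pi/2$ in the tangent plane that realises $v\mapsto v\times\nu$ produces precisely the surface pushforward $\mathbf{T}_*$ defined in \eqref{def-T}, so $E'\times\nu' = \mathbf{T}_*(E\times\nu)$; the argument for $H$ is identical. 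The $H(\curl, \Omega)$ case follows by density and the continuity of the tangential trace map into $H^{-1/2}(\dive_\Gamma, \partial\Omega)$.

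The main obstacle is organisational rather than conceptual: the identities require careful bookkeeping of Jacobian matrices, their inverses, and determinants across the interior curl rule, the boundary pushforward, and the density argument; the low $H(\curl)$ regularity prevents a purely pointwise proof and forces a mollification-and-limit step both in the bulk and at the trace. None of the individual steps is genuinely hard, and the overall result is a standard transformation-optics identity; see \cite{Ng-Superlensing-Maxwell} for a treatment in the same framework.
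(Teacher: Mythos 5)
Your proposal is correct, and it is worth noting that the paper itself gives no proof of this lemma: it is stated as a recalled, known change-of-variables formula (with the proof residing in the cited reference \cite{Ng-Superlensing-Maxwell}), so there is no in-paper argument to compare against. Your route --- the commutation of pullback of $1$-forms with the exterior derivative, written out via the cofactor identity for the Levi-Civita contraction, followed by the cancellation $\nabla\mathcal T^{T}\nabla\mathcal T^{-T}=I$ in the terms $\mathcal T_*\mu\,H'$ and $\mathcal T_*\eps\,E'$, a density/mollification step to handle $H(\curl,\Omega)$ regularity, and the identification of the surface pushforward ${\bf T}_*$ for the tangential traces --- is exactly the standard verification one would write down, and each step is sound. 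One small point you should flag explicitly: your computation produces $j'(y)=\nabla\mathcal T(x)\,j(x)/\det\nabla\mathcal T(x)$, i.e.\ the current density transforms with the full cofactor matrix, whereas the displayed definition of $\mathcal T_*j$ in the paper omits the factor $\nabla\mathcal T(x)$; the version your derivation yields is the one actually required for \eqref{lem-TO-1} to hold, so the paper's formula should be read as containing a typographical omission rather than as contradicting your result.
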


For a tangential vector field $g$  defined in $\partial \Omega$,  we denote
\begin{equation}\label{def-T}
{\bf T}_*g(x') = \sign \cdot \frac{\nabla_{\partial \Omega} {\bf T}(x) g(x)}{|\det \nabla_{\partial \Omega}  {\bf T}(x)|}  \mbox{ with } x' = {\bf T}(x),  
\end{equation}
where $\sign := \det \nabla {\mathcal T} (x)/ |\det \nabla {\mathcal T}(x)|$ for some $x \in \Omega$.

\section{Proof of Theorem~\ref{thm-main} and Propositions~\ref{pro2} and \ref{pro1}} \label{sect-4}

In this section, we present the proof of Theorem~\ref{thm-main} and Propositions~\ref{pro2} and \ref{pro1}. The proof is based on three-sphere inequalities, the use of reflections  coming from the definition of complementary media as in  \cite{Ng-Complementary}, and the use of the localized singularity removal technique with its roots in \cite{Ng-Negative-Cloaking, Ng-Superlensing}.

\subsection{Proof of Theorem~\ref{thm-main}}  The starting point of the proof is the use of reflections $\cF$ and $\cG$ from the definition of doubly complementary media.  Set 
\begin{equation} \label{def-EH1delta}
(E_{1, \delta}, H_{1, \delta}) = (\cF*E_\delta, \cF*H_\delta) \mbox{ in } \mR^3 \setminus \Omega_3
\end{equation}
and 
\begin{equation} \label{def-EH2delta}
(E_{2, \delta}, H_{2, \delta}) =  (\cG*\cF*E_\delta, \cG*\cF*H_\delta) \mbox{ in } \Omega_3. 
\end{equation}
We also introduce $(\tE_\delta, \tH_\delta) \in [H_{\loc}(\curl, \mR^3)]^2$ as follows:
\begin{equation}\label{main-tEH}
(\tE_\delta, \tH_\delta) = \left\{\begin{array}{cl}
(E_\delta, H_\delta) & \mbox{ in } \mR^3 \setminus \Omega_3, \\[6pt]
(E_\delta, H_\delta)  - \Big[  (E_{1, \delta}, H_{1, \delta})  - (E_{2, \delta}, H_{2, \delta})  \Big] & \mbox{ in } \Omega_3 \setminus \Omega_2, \\[6pt]
(E_{2, \delta}, H_{2, \delta}) & \mbox{ in } \Omega_2.  
\end{array}\right.
\end{equation}
It follows from Lemma~\ref{lem-TO} and the definitions of $(\teps, \tmu)$ in \eqref{def-tepsmu} and of $\tJ$ in \eqref{def-tJ} that  $(\tE_\delta, \tH_\delta) \in [H_{\loc}(\curl, \mR^3)]^2$ is a radiating solution of the system  
\begin{equation*} \left\{
\begin{array}{cl}
\nabla \times \tE_\delta = i \omega \tmu \tH_\delta -  \delta  \omega \mathds{1}_{\Omega_3 \setminus \Omega_2} \cF_*I H_{1, \delta}  & \mbox{ in } \mR^3, \\[6pt]
\nabla \times \tH_\delta = -  i \omega \teps \tE_\delta  +  \delta \omega  \mathds{1}_{\Omega_3 \setminus \Omega_2} \cF_*I E_{1, \delta}  + \tJ & \mbox{ in } \mR^3. 
\end{array}\right. 
\end{equation*}
We derive from the definition of $(\tE, \tH)$ in \eqref{main-tEH}  that  $(\tE_\delta - \tE, \tH_\delta - \tH) \in [H_{\loc}(\curl, \mR^3)]^2$ is  radiating  and satisfies  
\begin{equation*} \left\{
\begin{array}{cl}
\nabla \times (\tE_\delta - \tE) = i \omega \tmu  (\tH_\delta - \tH) -  \delta \omega \mathds{1}_{\Omega_3 \setminus \Omega_2} \cF_*I H_{1, \delta} & \mbox{ in } \mR^3, \\[6pt]
\nabla \times (\tH_\delta - \tH) = -  i \omega \teps (\tE_\delta - \tE)  +  \delta \omega  \mathds{1}_{\Omega_3 \setminus \Omega_2} \cF_*I E_{1, \delta} & \mbox{ in } \mR^3.
\end{array}\right. 
\end{equation*}
Since $\teps$ and $\tmu$ are uniformly elliptic, it follows that, see, e.g., \cite[Lemma 4]{Ng-Superlensing-Maxwell}, 
\begin{equation}\label{main-p1}
\| (\tE_\delta - \tE, \tH_\delta - \tH) \|_{L^2(B_R)} \le C_R \delta \| (E_{\delta}, H_{\delta} )\|_{L^2(B_{R_0})}.
\end{equation}
Here and in what follows, without loss of generality, one assume that $\Omega_3 \subset B_{R_0}$. 
Applying Lemma~\ref{lem-stability} to $(E_\delta, H_\delta)$, we obtain 
\begin{equation}\label{main-p2}
 \| (E_\delta, H_\delta) \|_{L^2(B_{R_0})} \le C \delta^{-1} \| J\|_{L^2(\mR^3)}. 
\end{equation}
Here and in what follows in this proof, $C$ denotes a positive constant independent of $J$, $\delta$, and $R$.  Combining \eqref{main-p1} and \eqref{main-p2} yields 
\begin{equation}\label{main-p3}
\| (\tE_\delta - \tE, \tH_\delta - \tH) \|_{L^2(B_R)}  \le C_R \| J\|_{L^2(\mR^3)}. 
\end{equation}
This  in turn implies that
\begin{equation*}
\| (\tE_\delta, \tH_\delta ) \|_{L^2(B_R)}  \le C_R \| J\|_{L^2(\mR^3)}, 
\end{equation*}
which is \eqref{part1}.

We next establish \eqref{part2}. Using \eqref{part1}, without loss of generality, one might assume that $\supp J \cap \partial \Omega_2 = \emptyset$ and  $\supp J \cap \partial \Omega_1 = \emptyset$. Fix compact sets $K_1 \subset \Omega_1$,  $K_2 \subset \Omega_2 \setminus \bar \Omega_1$, and $
K_3 \subset B_{2 R_0} \setminus \bar \Omega_3$ (arbitrary).  Fix $x_0 \in \partial B_{3R_0/2}$, and set $r_0 = R_0/4$. 
Applying Lemma~\ref{lem-3spheres} with $\Omega = B_{2R_0 } \setminus \bar \Omega_2$, we have, for some $0< \alpha_3 < 1$ independent of $J$ and $\delta$, 
\begin{equation*}
\| (E_\delta, H_\delta ) \|_{L^2(K_3)} \le C\Big(\|(E_\delta , H_\delta) \|_{L^2(B_{r_0}(x_0))} + \| J \|_{L^2(\mR^3)} \Big)^{\alpha_3}  \Big( \|(E_\delta, H_\delta) \|_{L^2(B_{2R_0} \setminus \Omega_2)} + \| J \|_{L^2(\mR^3)} \Big)^{1- \alpha_3}.
\end{equation*}
We derive from \eqref{part1} and  \eqref{main-p2} that 
\begin{equation}\label{main-e1}
\| (E_\delta , H_\delta  ) \|_{L^2(K_3)} \le C \delta^{\alpha_3-1} \| J \|_{L^2(\mR^3)}. 
\end{equation}
Similarly, using
\begin{equation*}
\| (E_\delta, H_\delta) \|_{L^2(\cF^{-1} \circ \cG^{-1} (\Omega_2))} \le C \| \tE_\delta, \tH_\delta \|_{L^2(\Omega_2)} \le C \| J \|_{L^2(\mR^3)} 
\end{equation*}
instead of \eqref{part1}, applying Lemma~\ref{lem-3spheres} with $\Omega = \Omega_1$,  and noting that $\cF^{-1} \circ \cG^{-1} (\Omega_2)) \Subset \Omega_1$,  one obtains 
\begin{equation}\label{main-e2}
\| (E_\delta , H_\delta  ) \|_{L^2(K_1)} \le C \delta^{\alpha_1-1} \| J \|_{L^2(\mR^3)}
\end{equation}
for some $0 < \alpha_1 < 1$ independent of $J$ and $\delta$.  

Using \eqref{main-p3}, we have 
\begin{equation*}
 \| (\tE_\delta, \tH_\delta)\|_{L^2(\cF(K_2))}  \le C \| J \|_{L^2(\mR^3)}.  
\end{equation*}
Since $K_1$ and $K_3$ are arbitrary, it follows from \eqref{main-e1} and \eqref{main-e2} that 
\begin{equation*}
 \| (E_{1, \delta}, H_{1, \delta})\|_{L^2(\cF(K_2))}  \le C \delta^{\alpha_2 - 1} \| J \|_{L^2(\mR^3)} 
\end{equation*}
for some $0 < \alpha_2 < 1$ independent of $J$ and $\delta$.  This implies 
\begin{equation}\label{main-e3}
 \| (E_{\delta}, H_{\delta})\|_{L^2(K_2)}  \le C \delta^{\alpha_2 - 1} \| J \|_{L^2(\mR^3)}. 
\end{equation}

Combining  \eqref{main-e1}, \eqref{main-e2}, and \eqref{main-e3} yields 
\begin{equation*}
\| (E_\delta, H_\delta)\|_{L^2(K_1 \cup K_2 \cup K_3)} \le C \delta^{\alpha - 1} \| J \|_{L^2(\mR^3)}, 
\end{equation*}
with $\alpha = \min\{ \alpha_1, \alpha_2, \alpha_3\}$.  By choosing $K_1, K_2$, and $K_3$ such that $\supp J \subset K_1 \cup K_2 \cup K_3$ (this is possible since one assumes that $\supp J \cap (\partial \Omega_1 \cup \partial \Omega_2 ) = \emptyset$), we derive from Lemma~\ref{lem-stability} that 
\begin{equation*}
\| (E_\delta, H_\delta)\|_{L^2(B_{R_0})} \le C \delta^{\alpha/2 - 1} \| J \|_{L^2(\mR^3)}. 
\end{equation*}
Assertion \eqref{part2} now follows from \eqref{main-p1}. \qed

\begin{remark} \rm The introduction of $(\tE_\delta, \tH_\delta)$ plays an important role in our analysis. These fields also play an important role in the proof of cloaking and superlensing using complementary media, see \cite{Ng-Superlensing-Maxwell, Ng-Negative-Cloaking-Maxwell}. 
\end{remark}

\subsection{Proof of Propositions~\ref{pro2}}

We use the same notations as in the proof of Theorem~\ref{thm-main}.  The key ingredient of the proof is the existence and uniqueness of  radiating solutions of \eqref{Main-eq-delta} with $\delta = 0$.  The existence proof  is then  based on the formula
\begin{equation}\label{eq-EH-0}
(E_0, H_0) = \left\{ \begin{array}{cl}
(\tE, \tH) & \mbox{ in } \mR^3 \setminus \Omega_2, \\[6pt] 
(\cF^{-1}*\tE, \cF^{-1}* \tH) & \mbox{ in } \Omega_2 \setminus \Omega_1, \\[6pt]
(\cF^{-1}*\cG^{-1}*\tE, \cF^{-1}* \cF^{-1} *\tH) & \mbox{ in } \Omega_1. 
\end{array} \right. 
\end{equation}
Using Lemma~\ref{lem-TO},  one can check that $(E_0, H_0) \in \big[H_{\loc}(\curl, \mR^3) \big]^2$ is a radiating solution of  \eqref{Main-eq-delta-1} with $\delta = 0$. Defining $(E_{1, 0}, H_{1, 0})$ and $(E_{2, 0}, H_{2, 0})$ by \eqref{def-EH1delta} and \eqref{def-EH2delta} with $\delta = 0$ and using the unique continuation principle, one can check that  if $(E_0, H_0) \in \big[H_{\loc}(\curl, \mR^3) \big]^2$ is a radiating solution of  \eqref{Main-eq-delta-1} with $\delta = 0$, then $(E_0, H_0)$ is given by  \eqref{eq-EH-0}, which implies its uniqueness.  To establish the boundedness of $(E_\delta, H_\delta)$, we note that 
\begin{equation*} \left\{
\begin{array}{cl}
\nabla \times (E_\delta - E_0) = i \omega \mu_\delta  (H_\delta - H_0)  +  i \omega (\mu_0 - \mu_\delta) H_0 & \mbox{ in } \mR^3, \\[6pt]
\nabla \times (H_\delta - H_0) = -  i \omega \eps_\delta (E_\delta - E_0)   - i \omega (\eps_0 - \eps_\delta) E_0& \mbox{ in } \mR^3.
\end{array}\right. 
\end{equation*}
Applying Lemma~\ref{lem-stability}, one obtains 
\begin{equation*}
\| (E_\delta - E_0, H_\delta - H_0 ) \|_{L^2(B_R)}  \le C_R \|(E_0, H_0)\|_{L^2(\Omega_2 \setminus \Omega_1)},  
\end{equation*}
which yields 
\begin{equation}\label{pro2-1}
\| (E_\delta, H_\delta ) \|_{L^2(B_R)}  \le C_R \| (J_e, J_m) \|_{L^2(\mR^3)}.   
\end{equation}
The convergence of $(E_\delta, H_\delta)$ to $(E_0, H_0)$ in $\big[L^2_{\loc}\big(\mR^3 \setminus (\partial \Omega_1 \cup \partial \Omega_2) \big)\big]^6$ follows from \eqref{pro2-1} using   the compactness criterion for Maxwell equations, see,  e.g., \cite{Ng-Superlensing-Maxwell}, and the uniqueness of $(E_0, H_0)$.  

To prove \eqref{pro2-p2}, we note that 
\begin{equation*} \left\{
\begin{array}{cl}
\nabla \times (\tE_\delta - \tE) = i \omega \tmu  (\tH_\delta - \tH) - \delta \omega \mathds{1}_{\Omega_3 \setminus \Omega_2} \cF_*I H_{1, \delta} & \mbox{ in } \mR^3, \\[6pt]
\nabla \times (\tH_\delta - \tH) = -  i \omega \teps (\tE_\delta - \tE)  + \delta \omega \mathds{1}_{\Omega_3 \setminus \Omega_2} \cF_*I E_{1, \delta} & \mbox{ in } \mR^3.
\end{array}\right. 
\end{equation*}
Since $\teps$ and $\tmu$ are uniformly elliptic, we derive that  
\begin{equation*}
\| (\tE_\delta - \tE, \tH_\delta - \tH) \|_{L^2(B_R)} \le C_R \delta \| (E_{\delta}, H_{\delta} )\|_{L^2(B_{R_0})}, 
\end{equation*}
and \eqref{pro2-p2} follows from \eqref{pro2-1}. \qed

\begin{remark} \rm
Assertion~\ref{pro2-p1} and the convergence of $(E_\delta, H_\delta)$ to $(E_0, H_0)$ were established in \cite{Ng-Superlensing-Maxwell} when $\supp J_m \cap \Omega_3 = \emptyset$ and $J_e = 0$.  Assertion \eqref{pro2-p2} is known from \cite{Ng-Superlensing-Maxwell} with the term $\delta^{1/2}$
instead of $\delta$ when $\supp J \cap \Omega_3 = \emptyset$. The proof of \eqref{pro2-p2} is different from the one in \cite{Ng-CALR} because of the  involvement of the localized singularity removal technique, even for a stable situation (see also \cite{Ng-survey} for the acoustic setting). 
\end{remark}

As a consequence of Proposition~\ref{pro1}, one obtains the following result that  is used in the proof of Theorem~\ref{thm3}:

\begin{corollary}\label{cor-pro2}
Assume that $(\eps_0, \mu_0)$ is doubly complementary. Let $J_e, J_m \in [L^2(\mR^3)]^3$ with $\supp J_e, \supp J_m \subset B_{R_0}$,  $\supp J_e \cap (\Omega_3 \setminus O) = \emptyset$, and $\supp J_m \cap (\Omega_3 \setminus O) = \emptyset$, where $O := \cF^{-1}\circ \cG^{-1}(\Omega_2)$. Set $\Gamma = \partial (\Omega_3 \setminus O)$, 
 and  let  $f, g \in H^{-1/2}(\dive_\Gamma, \Gamma)$. Assume that  $(E_\delta, H_\delta) \in [L^2_{\loc}(\mR^3)]^6$ is  the unique radiating  solution of
\begin{equation*} 
\left\{\begin{array}{cl}
\nabla \times E_\delta = i \omega \mu_\delta H_\delta + J_e &  \mbox{ in } \mR^3 \setminus \Gamma, \\[6pt]
\nabla \times H_\delta = - i \omega \eps_\delta E_\delta + J_m &  \mbox{ in } \mR^3 \setminus \Gamma, \\[6pt]
[E_\delta \times \nu] = f, \quad [H_\delta \times \nu] = g &  \mbox{ on } \Gamma.  
\end{array} \right.
\end{equation*}
We have,  for $R>0$,  
\begin{equation}\label{pro2-p1}
\| (E_\delta, H_\delta) \|_{L^2(B_R)} \le C_R \Big( \| (J_e, J_m)\|_{L^2(\mR^3)} +  \|(f, g)\|_{H^{-1/2}(\dive_\Gamma, \Gamma)} \Big) 
\end{equation}
for some positive constant  $C_R$ independent of $(J_e, J_m)$, $(f, g)$, and $\delta$. 
\end{corollary}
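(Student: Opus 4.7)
The plan is to reduce the statement to Proposition~\ref{pro2} by absorbing the tangential jumps on $\Gamma = \partial(\Omega_3 \setminus O) = \partial \Omega_3 \cup \partial O$ into a compactly supported lifting. First I would construct a pair $(\hE, \hH) \in [H(\curl, \mR^3 \setminus \Gamma)]^2$ with compact support contained in a one-sided tubular neighborhood of $\Gamma$ lying in the \emph{outer} region $(\mR^3 \setminus \bar\Omega_3) \cup O$, identically zero on $\Omega_3 \setminus O$, realizing
\[
[\hE \times \nu] = f, \qquad [\hH \times \nu] = g \quad \text{on } \Gamma,
\]
and satisfying $\|\hE\|_{H(\curl, \mR^3 \setminus \Gamma)} + \|\hH\|_{H(\curl, \mR^3 \setminus \Gamma)} \le C \|(f,g)\|_{H^{-1/2}(\dive_\Gamma, \Gamma)}$. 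Such a lifting exists by the surjectivity of the tangential trace operator $H(\curl, U) \to H^{-1/2}(\dive_\Gamma, \Gamma)$ on a Lipschitz domain $U$ adjacent to $\Gamma$, see \cite{AV, BC}, applied on a thin tubular neighborhood on the outer side of $\Gamma$; the lifting can be arranged so that its tangential trace vanishes on the outer boundary of the tubular neighborhood, permitting extension by zero to $\mR^3 \setminus \Gamma$.

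Having $(\hE, \hH)$ in hand, I would set $(E'_\delta, H'_\delta) := (E_\delta - \hE, H_\delta - \hH)$. This difference has no tangential jump across $\Gamma$, hence $(E'_\delta, H'_\delta) \in [H_{\loc}(\curl, \mR^3)]^2$ is a radiating solution of \eqref{Main-eq-delta-1} with modified sources
\[
J'_e := J_e + i\omega \mu_\delta \hH - \nabla \times \hE, \qquad J'_m := J_m - i\omega \eps_\delta \hE - \nabla \times \hH.
\]
Since $\hE, \hH$ vanish on $\Omega_3 \setminus O$ and $\supp J_e, \supp J_m$ are disjoint from $\Omega_3 \setminus O$ by hypothesis, the supports of $J'_e, J'_m$ are also disjoint from $\Omega_3 \setminus O$, and the lifting estimate yields $\|(J'_e, J'_m)\|_{L^2(\mR^3)} \le C(\|(J_e, J_m)\|_{L^2(\mR^3)} + \|(f,g)\|_{H^{-1/2}(\dive_\Gamma, \Gamma)})$. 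Applying Proposition~\ref{pro2} to $(E'_\delta, H'_\delta)$ gives a bound on $\|(E'_\delta, H'_\delta)\|_{L^2(B_R)}$, and combining with the lifting bound and the triangle inequality produces the desired estimate for $(E_\delta, H_\delta)$.

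The main obstacle is the lifting construction with the required one-sided, compactly supported nature: a direct $H(\curl)$-lifting followed by multiplication by a scalar cutoff does not preserve the prescribed tangential trace on $\Gamma$, since the cutoff alters the curl across the level sets where it varies. A clean way around this is to define $(\hE, \hH)$ on a thin shell adjacent to $\Gamma$ by prescribing tangential trace $f, g$ on the $\Gamma$ side and zero tangential trace on the outer face, then extend by zero; this requires either a graph-norm formulation of the trace or a partition-of-unity argument in boundary-adapted coordinates, both of which are standard once one has the trace theory of \cite{AV, BC}. Once the lifting is established, the reduction to Proposition~\ref{pro2} is routine.
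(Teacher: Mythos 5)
Your proposal is correct and follows essentially the same route as the paper: lift the jump data $(f,g)$ into $H(\curl)$ on the outer side of $\Gamma$, subtract the lifting from $(E_\delta,H_\delta)$, and apply Proposition~\ref{pro2} to the resulting jump-free radiating solution with the lifting absorbed into the sources. The ``main obstacle'' you identify is not one: the paper simply multiplies a standard trace lifting by a scalar cutoff $\varphi$ equal to $1$ on a neighborhood of $\overline{\Omega_3}$, which leaves the tangential trace on $\Gamma$ untouched (since $\varphi\equiv 1$ there) while the commutator terms involving $\nabla\varphi$ are merely additional $L^2$ sources supported away from $\Omega_3\setminus O$, so no one-sided lifting with vanishing trace on the outer face of a tubular neighborhood is required.
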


\begin{proof} Since $R_0$ is fixed but arbitrary, one might assume that $\Omega_3 \subset B_{R_0/2}$.  By the trace theory, see, e.g., \cite{AV, BC}, there exists $(E, H) \in \big[H \big(\curl, (B_{R_0} \setminus \Omega_3) \cup O \big) \big]^2$ such that 
\begin{equation*}
E \times \nu = f, \quad H \times \nu = g \mbox{ on } \Gamma, 
\end{equation*}
and 
\begin{equation*}
\| (E, H) \|_{H\big(\curl, (B_{R_0} \setminus \Omega_3) \cup O \big)} \le C \| (f, g) \|_{H^{-1/2}(\dive_\Gamma, \Gamma)}
\end{equation*}
for some positive constant $C$ independent of $(f, g)$. Fix $\varphi  \in C^1(\mR^3)$ such that 
$\varphi = 1 $ in $B_{R_0/2}$ and $\supp \varphi \Subset B_{R_0}$.  
Set 
\begin{equation*}
(\cE_\delta, \cH_\delta) 
=
\left\{
\begin{array}{cl}
(E_\delta - \varphi E, H_\delta - \varphi H)  & \mbox{ in } \mR^3 \setminus (\Omega_3 \setminus O), \\[6pt]
(E_\delta, H_\delta) & \mbox{ in } \Omega_3 \setminus O. 
\end{array}\right.
\end{equation*}
Then $(\cE_\delta, \cH_\delta) \in [H_{\loc}(\curl, \mR^3)]^2$ is  radiating and satisfies  
\begin{equation*} 
\left\{\begin{array}{cl}
\nabla \times  \cE_\delta = i \omega \mu_\delta  \cH_\delta + J_{e,\delta}&  \mbox{ in } \mR^3, \\[6pt]
\nabla \times  \cH_\delta = - i \omega \eps_\delta \cE_\delta  + J_{m, \delta} &  \mbox{ in } \mR^3, 
\end{array} \right.
\end{equation*}
where
$$
J_{e, \delta} = \mathds{1}_{\mR^3 \setminus (\Omega_3 \setminus O)} \big[- \nabla \times (\varphi E) + i \omega \mu_\delta (\varphi H)\big] + J_e  \mbox{ and } J_{m, \delta} = \mathds{1}_{\mR^3 \setminus (\Omega_3 \setminus O)} \big[ -  \nabla \times (\varphi H) - i \omega \eps_\delta (\varphi E) \big] + J_m. 
$$
The conclusion now follows by applying Proposition~\ref{pro1} to $(\cE_\delta , \cH_\delta)$. 
\end{proof}

\subsection{Proof of Propositions~\ref{pro1}} 
Using a translation, local charts, and Lemma~\ref{lem-TO},  one can assume that $x_0 = 0$ and  $D = B_{r_0} \cap \mR^3_+$, where $\mR^3_+ : =  \{x \in \mR^3;  x \cdot e_3 > 0 \}$ with $e_3= (0,  0, 1)$. Set 
\begin{equation*}
(E_{1, \delta}, H_{1, \delta}) = (\cF*E_\delta, \cF*H_\delta) \mbox{ in } D
\end{equation*}
and
\begin{equation*}
(\cE_\delta, \cH_\delta) = (E_\delta, H_\delta) - (E_{1, \delta}, H_{1, \delta})  \mbox{ in } D. 
\end{equation*}
Then, by Lemma~\ref{lem-TO}, 
\begin{equation*}
\left\{\begin{array}{cl}
\nabla \times \cE_{\delta} = i \omega  \mup \cH_{\delta}  -  \delta \omega \cF_*I H_{1, \delta}  &  \mbox{ in } D,  \\[6pt]
\nabla \times \cH_{\delta} = - i \omega \ep \cE_{\delta}  + \delta \omega \cF_*I E_{1, \delta} + J -  \cF_*J  & \mbox{ in } D, \\[6pt] 
\cE_\delta \times \nu  = \cH_\delta \times \nu = 0 &  \mbox{ on } \partial \mR^3_{+} \cap B_{r_0}. 
\end{array}\right.
\end{equation*}
Our goal is to prove that $(\cE_{2^{-n}}, \cH_{2^{-n}})$ is a Cauchy sequence in $[L^2(D \cap B_{\tau_0})]^6$ for some $\tau_0 > 0$ if \eqref{pro1-p1} does not hold.  

Fix an extension of  $(\ep, \mup)$ in $B_{r_0}$ of class $C^2$ and continue to  denote this extension by $(\ep, \mup)$. 
Set, in $B_{r_0}$,  
$$
f_n = \mathds{1}_D \Big( - \omega 2^{-(n+1)} \cF_*I H_{1, 2^{-(n+1)}}  + \omega  2^{-n} \cF_*I H_{1, 2^{-n}} \Big)
$$
and 
$$
g_n = \mathds{1}_D  \Big( \omega  2^{-(n+1)} \cF_*I E_{1, 2^{-(n+1)}}  - \omega  2^{-n} \cF_*I E_{1, 2^{-n}} \Big). 
$$
Let $(\bE_n, \bH_n) \in [H(\curl, B_{r_0})]^2$ be such that 
\begin{equation*}
\left\{\begin{array}{cl}
\nabla \times \bE_{n} = i \omega  \mup \bH_{n}  + f_n  &  \mbox{ in } B_{r_0},  \\[6pt]
\nabla \times \bH_{n} = - i \omega \ep \bE_{n}  + g_n  & \mbox{ in } B_{r_0}, \\[6pt]
(\bE_n \times \nu) \times \nu +  \bH_n \times \nu = 0 \mbox{ on } \partial B_{r_0}. 
\end{array}\right.
\end{equation*}
Then 
\begin{equation}\label{pro1-0}
\| (\bE_n, \bH_n) \|_{L^2(B_{r_0})} \le C 2^{-n}\Big( \| (E_{1, 2^{-n}}, H_{1, 2^{-n}}) \|_{L^2(D)} + \| (E_{1, 2^{-n-1}}, H_{1, 2^{-n-1}}) \|_{L^2(D)} \Big). 
\end{equation}

Set 
\begin{equation}\label{def-hEH-n}
(\hE_n, \hH_n) = (\cE_{2^{-(n+1)}} - \cE_{2^{-n}} , \cH_{2^{-(n+1)}} - \cH_{2^{-n}} ) \mathds{1}_{D} -    (\bE_n,  \bH_n)  \mbox{ in } B_{r_0}. 
\end{equation}
One has 
\begin{equation*}
\left\{\begin{array}{cl}
\nabla \times \hE_{n} = i \omega  \mup \hH_{n}   &  \mbox{ in } B_{r_0},  \\[6pt]
\nabla \times \hH_{n} = - i \omega \ep \hE_{n}  & \mbox{ in } B_{r_0}. 
\end{array}\right.
\end{equation*}
Set $S = (0, 0,   -r_0/4) \in \mR^3$ and denote, for $0 < r < r_0/3$  
\begin{multline*}
\|(\hE_n, \hH_n) (\cdot - S) \|_{\bH(\partial  B_r)} \\[6pt]
 = \| (\hE_n,\hH_n) (\cdot - S) \|_{H^{1/2}(\partial  B_r)} +  \| (\ep \nabla \hE_n (\cdot - S)  \cdot \nu, \mu \nabla \hH_n (\cdot - S)  \cdot \nu)\|_{H^{-1/2}(\partial B_r)}.
\end{multline*}
Using \eqref{eq-H} and \eqref{eq-E} and applying the three-sphere inequality \cite[Lemma 7]{Ng-Negative-Cloaking-Maxwell} (see also \cite[Theorem 2]{MinhLoc2}), we have, for 
$t \in (r_0/4, r_0/3)$,  
\begin{equation}\label{pro1-1}
\| (\hE_n, \hH_n)(\cdot - S) \|_{\bH(\partial B_{t}) } \le C  \| (\hE_n, \hH_n) (\cdot - S) \|_{\bH(\partial B_{r_0/4})}^{\beta} \| (\hE_n, \hH_n)  (\cdot - S)\|_{\bH(\partial B_{r_0/3})}^{1- \beta}, 
\end{equation}
where 
$$
\beta  = \frac{R_2^{-q} - R_3^{-q}}{R_1^{-q} - R_{3}^{-q}}, 
$$
with $R_1  = r_0/4$, $R_2 =  t$, and $R_3 = r_0/3$ for some positive constant $q$ depending only on $(\ep, \mup)$ and for some positive constant $C$ independent of $n$. 
Thus, by choosing $t$ close to $r_0/4$, i.e., $R_2$ close to $R_1$, one has $\beta > (1 + \alpha)/2$. Fix such a $t$.  Set $\tau_0  = t - r_0/4$.

We now prove \eqref{pro1-p1} by contradiction for this $\tau_0$. Assume that 
\begin{equation}\label{pro1-2}
\limsup_{\delta \to 0} \delta^{2 \alpha} \| (E_\delta, H_\delta) \|_{L^2(\Omega_2 \setminus \Omega_1)}^2 < + \infty. 
\end{equation}
As in the proof of Lemma~\ref{lem-stability}, one has 
\begin{equation*}
 \| (E_\delta, H_\delta) \|_{L^2(B_{R_0})}  \le C \Big(  \| (E_\delta, H_\delta) \|_{L^2(B_{R_0})}  + \| J \|_{L^2(\mR^3)} \Big). 
\end{equation*}
It follows from \eqref{pro1-2} that 
\begin{equation}\label{pro1-3}
\limsup_{\delta \to 0} \delta^{2 \alpha} \| (E_\delta, H_\delta) \|_{L^2(B_{R_0})} < + \infty.  
\end{equation}
We derive from \eqref{pro1-0} and \eqref{pro1-3} that 
\begin{equation}\label{pro1-4}
\| (\hE_n, \hH_n) \|_{L^2(B_{r_0})}  \le C 2^{\alpha n} \quad \mbox{ and } \quad \| (\bE_n, \bH_n) \|_{L^2(B_{r_0})}  \le C 2^{(\alpha -1 ) n}. 
\end{equation}

Combining \eqref{pro1-1}  and \eqref{pro1-4} and using the definition of $(\hE_n, \hH_n)$ in \eqref{def-hEH-n} yield 
\begin{equation*}
\| (\hE_n, \hH_n)(\cdot - S) \|_{\bH(\partial B_{t}) } \le C 2^{n (\alpha - \beta)}.
 \end{equation*}
Hence $(\hE_n, \hH_n)(\cdot - S)$ is a Cauchy sequence in $[L^2(B_{t})]^6$, as is $ (\cE_{2^{-n}} , \cH_{2^{-n}} )$ in $[L^2(B_{\tau_0})]^6$. It is clear that its limit satisfies \eqref{pro1-g-S}. We have a contradiction. The proof is complete. \qed

\section{Proof of Theorem~\ref{thm3}}   \label{sect-6}

For a simple presentation, we will assume that $\lambda = \omega = 1$.  Before presenting the proof,   we introduce,  for $n \ge 0$, 
\begin{equation}\label{def-jn}
\hat j_n(t) =1 \cdot 3 \cdots (2n + 1) j_n(t) \quad \mbox{ and } \quad  \hat y_n = -  \frac{y_n(t)}{1 \cdot 3 \cdots (2n-1)} ,  
\end{equation}
where $j_n$ and $y_n$ are the spherical Bessel functions of the first and the second kind  of order $n$ respectively. Recall that (see, e.g.,  \cite[(2.37) and (2.38)]{ColtonKressInverse}),    as $n \to + \infty$, 
\begin{equation}\label{jy-n}
\hat j_n(r) = r^n \big[1 + O(1/n) \big] \quad \mbox{ and } \quad \hat y_n(r) = r^{-n-1} \big[1 + O(1/n) \big]. 
\end{equation}
In what follows, let $Y_n^m$ denote the  spherical harmonic function of degree $n$ and of order $m$,  and set 
$$
U_n^m (x) = \frac{1}{\sqrt{n(n+1)}} \nabla_{S^2} Y_n^m (x) \quad \mbox{ and } V_n^m (x) = x \times U_n^m (x) \mbox{ on } \mS^2 = \partial B_1. 
$$

We are ready to give

\subsection{Proof of Assertion~(1) of Theorem~\ref{thm3}}   Let $(\bE, \bH) \in L^2(B_{r_3} \setminus B_{r_2})$ be the unique solution to 
\begin{equation}\label{eq-bEH}
\left\{\begin{array}{cl}
\nabla \times \bE = i \omega \bH&  \mbox{ in } B_{r_3} \setminus B_{r_2}, \\[6pt]
\nabla \times \bH = - i \omega \bE + J  - \cF_*J & \mbox{ in } B_{r_3} \setminus B_{r_2}, \\[6pt]
\bE \times \nu =  0  & \mbox{ on } \partial B_{r_2}, \\[6pt] 
(\bE\times \nu) \times \nu + \bH \times \nu = 0  & \mbox{ on } \partial B_{r_3},
\end{array}\right.
\end{equation}
and set 
\begin{equation*}
(\cE, \cH)= (E - \bE, H - \bH) \mbox{ in } B_{r_0} \setminus B_{r_2}. 
\end{equation*}
Then $(\cE, \cH) \in [H(\curl, B_{r_0} \setminus B_{r_2})]^2$ satisfies 
\begin{equation*}
\left\{\begin{array}{cl}
\nabla \times \cE = i \omega \cH&  \mbox{ in } B_{r_0} \setminus B_{r_2}, \\[6pt]
\nabla \times \cH = - i \omega \cE   & \mbox{ in } B_{r_0} \setminus B_{r_2}, \\[6pt]
\cE \times \nu =  0, \quad  \cH \times \nu = -\bH \times \nu & \mbox{ on } \partial B_{r_2}. 
\end{array}\right.
\end{equation*}
Since $\omega =1$, one then can represent ${\cE, \cH}$ as follows, in $B_{r_0} \setminus B_{r_2}$, see, e.g., \cite{Kirsch}, with $r = |x|$ and $\hat x = x/ |x|$, 
\begin{align*}
\cE(x) =  &    \sum_{n=1}^\infty \sum_{|m| \le n} \sqrt{n (n +1)}  \frac{\alpha_{1, n}^m \hj_{n}(r) + \alpha_{2, n}^m \hy_n(r)}{r} Y_n^m(\hat x) \hat x  \\[6pt]
& + \sum_{n=1}^\infty \sum_{|m| \le n}
 \frac{\left(r \big[\alpha_{1, n}^m  \hj_n(r) + \alpha_{2, n}^m \hy_n (r) \big] \right)'}{r} U_n^m (\hat x) \\[6pt]
& + \sum_{n=1}^\infty \sum_{|m| \le n}  \big[\beta_{1, n}^m \hj_n(r) + \beta_{2, n}^m \hy_n(r) \big] V_n^m(\hat x)
\end{align*}
and 
\begin{align*}
\cH (x)=  &   i \sum_{n=1}^\infty \sum_{|m| \le n} \sqrt{n (n +1)}  \frac{\beta_{1, n}^m \hj_{n}(r) + \beta_{2, n}^m \hy_n(r)}{r} Y_n^m(\hat x) \hat x  \\[6pt]
& + i \sum_{n=1}^\infty \sum_{|m| \le n}
 \frac{\left(r \big[\beta_{1, n}^m  \hj_n(r) + \beta_{2, n}^m \hy_n (r) \big]  \right)'}{r} U_n^m (\hat x) \\[6pt]
& + i  \sum_{n=1}^\infty \sum_{|m| \le n}  \big[\alpha_{1, n}^m \hj_n(r) + \alpha_{2, n}^m \hy_n(r) \big] V_n^m(\hat x). 
\end{align*}
 Using \eqref{jy-n} and the fact that $\cE \times \nu = 0$ on $\partial B_{r_2}$, we derive that
\begin{equation}\label{abn}
\alpha_{2, n}^m \sim  r_2^{2n} \alpha_{1, n}^m \quad \mbox{ and } \quad \beta_{2, n}^m \sim  r_2^{2n} \beta_{1, n}^m \mbox{ for } n \ge N, |m| \le n 
\end{equation}
for some sufficiently large $N > 1$. Here and in what follows, $a \sim b$ means that $a \le C b$ and $b \le C a $ for some positive constant $C$ independent of $a$ and $b$.  Combining  \eqref{jy-n} and \eqref{abn} yields 
\begin{multline}\label{finite-EH}
\| (\cE, \cH)\|_{L^2(B_{r_0} \setminus B_{r_2})}^2 \sim \sum_{n =1}^{N-1} \sum_{|m| \le n} \sum_{j=1}^2 n^3 \big(  |\alpha_{j, n}^{m}|^2 + |\beta_{j, n}^{m}|^2 \big) r_0^{2 n} \\[6pt]
+ \sum_{n =N}^\infty \sum_{|m| \le n} n^3 \big( |\alpha_{1, n}^{m}|^2 + |\beta_{1, n}^{m}|^2 \big) r_0^{2 n} < + \infty. 
\end{multline}

Set 
\begin{equation}\label{xil}
\xi_n =  \delta^{\alpha} (r_3 / r_0)^{n}  \mbox{ for  } n \ge 1. 
\end{equation} 
The key point of the proof is the construction of  $(\cE_\delta, \cH_\delta) \in [L^2(B_{r_3} \setminus B_{r_2})]^6$ defined from $(\cE, \cH)$ as follows:
\begin{align}\label{pro3-Ed}
\cE_\delta =  &    \sum_{n=1}^\infty \sum_{|m| \le n} \frac{1}{1 + \xi_{n}}  \sqrt{n (n +1)}  \frac{\alpha_{1, n}^m \hj_{n}(r) + \alpha_{2, n}^m \hy_n(r)}{r} Y_n^m(\hat x) \hat x  \nonumber  \\[6pt]
& + \sum_{n=1}^\infty \sum_{|m| \le n} \frac{1}{1 + \xi_{n}}
 \frac{\left( r \big[\alpha_{1, n}^m  \hj_n(r) + \alpha_{2, n}^m \hy_n (r) \big]   \right)' }{r}U_n^m (\hat x)  \nonumber \\[6pt]
& + \sum_{n=1}^\infty \sum_{|m| \le n}  \frac{1}{1 + \xi_{n}}  \big[\beta_{1, n}^m \hj_n(r) + \beta_{2, n}^m \hy_n(r) \big] V_n^m(\hat x)
\end{align}
and 
\begin{align}\label{pro3-Hd}
\cH_\delta =  &   i  \sum_{n=1}^\infty \sum_{|m| \le n}  \frac{1}{1 + \xi_{n}} \sqrt{n (n +1)}  \frac{\beta_{1, n}^m \hj_{n}(r) + \beta_{2, n}^m \hy_n(r)}{r} Y_n^m(\hat x) \hat x  \nonumber \\[6pt]
& + i\sum_{n=1}^\infty \sum_{|m| \le n} \frac{1}{1 + \xi_{n}}
 \frac{ \left( r \big[\beta_{1, n}^m  \hj_n(r) + \beta_{2, n}^m \hy_n (r) \big] \right)' }{r}  U_n^m (\hat x) \nonumber \\[6pt]
& + iÒ \sum_{n=1}^\infty \sum_{|m| \le n}   \frac{1}{1 + \xi_{n}} \big[\alpha_{1, n}^m \hj_n(r) + \alpha_{2, n}^m \hy_n(r) \big] V_n^m(\hat x). 
\end{align}
Note that $(\cE_\delta, \cH_\delta)$ is defined in $B_{r_3} \setminus B_{r_2}$ instead of in $B_{r_0} \setminus B_{r_2}$ as the one of $(\cE, \cH)$. 

It is clear from the definition of $(\cE_\delta, \cH_\delta)$  in \eqref{pro3-Ed} and \eqref{pro3-Hd} that 
\begin{equation}\label{pro3-EHd}
\left\{\begin{array}{cl}
\nabla \times \cE_\delta = i \omega \cH_\delta &  \mbox{ in } B_{r_3} \setminus B_{r_2}, \\[6pt]
\nabla \times \cH_\delta = - i \omega \cE_\delta   & \mbox{ in } B_{r_3} \setminus B_{r_2}, \\[6pt]
\cE_\delta \times \nu =  0 & \mbox{ on } \partial B_{r_2}, 
\end{array}\right.
\end{equation}
and 
\begin{multline}\label{pro3-norm-EHd}
\|(\cE_\delta, \cH_\delta) \|_{L^2(B_{r_3} \setminus B_{r_2})}^2 \sim \sum_{n =1}^{N-1} \sum_{|m| \le n} \sum_{j=1}^2 n^3 \big(  |\alpha_{j, n}^{m}|^2
+ |\beta_{j, n}^{m}|^2 \big) r_0^{2 n}  \\[6pt]  +  \sum_{n =N}^\infty \sum_{|m| \le n} \frac{1}{1 + \xi_n^2}n^3 \big( |\alpha_{1, n}^{m}|^2 + |\beta_{1, n}^{m}|^2 \big) r_3^{2 n}.
\end{multline}
Here,  to derive the boundary condition $\cE_\delta \times \nu =  0$ on $\partial B_{r_2}$, we use the fact that $\cE \times \nu =  0$ on $\partial B_{r_2}$.
By the definition of $\xi_n$  in \eqref{xil}, one has
\begin{equation}\label{thm3-p1}
 \quad r_3^{2n}/(1 + \xi_n^2) \le C \delta^{-2 \alpha} r_0^{2n} \quad \mbox{ for } n \ge 1,   
\end{equation}
and, since $r_0 \ge r_2^\alpha r_3^{1- \alpha}$ and $0 < \alpha \le 1/2$, 
\begin{equation}\label{thm3-p1-1}
\xi_n^2 r_2^{2n}/(1 + \xi_n^2) \le C \delta^{2(1 - \alpha)} r_0^{2n} \quad \mbox{ for } n \ge 1.  
\end{equation}
These above two inequalities can be derived by separately considering the case $\xi_n \le 1$ and $\xi_n \ge 1$.

Combining \eqref{pro3-norm-EHd} and  \eqref{thm3-p1}  and using \eqref{finite-EH} give 
\begin{equation*}
\| (\cE_\delta, \cH_\delta) \|_{L^2(B_{r_3 \setminus B_{r_2}})}^2 \le C \delta^{-2 \alpha}. 
\end{equation*}

Let $(\tbE, \tbH) \in [H(\curl, B_{r_3} \setminus B_{r_2})]^2$ be the unique solution to 
\begin{equation}\label{eq-tbEH}
\left\{\begin{array}{cl}
\nabla \times \tbE = i \omega \tbH&  \mbox{ in } B_{r_3} \setminus B_{r_2}, \\[6pt]
\nabla \times \tbH = - i \omega \tbE + \cF_*J  & \mbox{ in } B_{r_3} \setminus B_{r_2}, \\[6pt]
\tbE\times \nu  = 0  & \mbox{ on } \partial B_{r_2}, \\[6pt] 
(\tbE\times \nu) \times \nu + \tbH \times \nu = 0  & \mbox{ on } \partial B_{r_3}. 
\end{array}\right.
\end{equation}
Set 
$$
\Gamma =  \cF^{-1}(\partial B_{r_3}) = \partial \Omega_1. 
$$
Let $(\cE_{1, \delta}, \cH_{1, \delta}), (\cE_{2, \delta}, \cH_{2, \delta}) \in [L^2_{\loc}(\curl, \mR^d)]^6$ be, respectively, the unique radiating solutions to
\begin{equation}\label{EH1}
\left\{\begin{array}{cl}
\nabla \times \cE_{1, \delta} = i \omega \mu_\delta \cH_{1, \delta} &  \mbox{ in } \mR^3, \\[6pt]
\nabla \times \cH_{1, \delta} = - i \omega \eps_\delta \cE_{1, \delta}   & \mbox{ in } \mR^3 \setminus \partial B_{r_2}, \\[6pt]
[\cH_{1, \delta} \times \nu] = -  (\cH_\delta + \bH) \times \nu& \mbox{ on } \partial B_{r_2}, 
\end{array}\right.
\end{equation}
and, with $O := \cF^{-1}\circ \cG^{-1}(\Omega_2)$, 
\begin{equation}\label{EH2}
\left\{\begin{array}{cl}
\nabla \times \cE_{2, \delta} = i \omega  \mu_\delta \cH_{2, \delta} &  \mbox{ in } \mR^3 \setminus (\partial B_{r_3} \cup \Gamma) , \\[6pt]
\nabla \times \cH_{2, \delta} = - i \omega \eps_\delta \cE_{2, \delta} + \mathds{1}_{\mR^3 \setminus B_{r_3}} J  +  \mathds{1}_{O} J   & \mbox{ in } \mR^3 \setminus  (\partial B_3 \cup \Gamma), \\[6pt]
[\cE_{2, \delta} \times \nu] = (\cE_\delta + \bE + \tbE) \times \nu, \;  [H_{2, \delta} \times \nu] = (\cH_\delta +  \bH +  \tbH ) \times \nu  & \mbox{ on } \partial B_{r_3}.\\[6pt]
[\cE_{2, \delta} \times \nu] = \big( \cF^{-1}*\tbE \big) \times \nu, \quad [H_{2, \delta} \times \nu] = \big( \cF^{-1}* \tbH \big) \times \nu  & \mbox{ on } \Gamma. 
\end{array}\right.
\end{equation}
Using \eqref{thm3-support} and applying Lemma~\ref{lem-TO}, one can derive from \eqref{eq-bEH}, \eqref{pro3-EHd},  \eqref{eq-tbEH}, \eqref{EH1}, and \eqref{EH2}, that 
\begin{multline}\label{EH-delta-**}
(E_\delta, H_\delta) - (\cE_\delta + \bE + \tbE, \cH_\delta + \bH + \tbH) \mathds{1}_{B_{r_3} \setminus B_{r_2}} \\[6pt]
- \Big( \cF^{-1}* \tbE, \cF^{-1}*\tbH \Big) \mathds{1}_{\Omega_2 \setminus \Omega_1}  
= (\cE_{1, \delta}, \cH_{1, \delta}) + (\cE_{2, \delta}, \cH_{2, \delta}) \mbox{ in } \mR^3. 
\end{multline}

From the definitions of $(\cE, \cH)$ and $(\cE_\delta, \cH_\delta)$,  we have  
\begin{multline*}
\| (\cE_\delta - \cE, \cH_\delta - \cH ) \|_{H^{-1/2}(\dive_\Gamma, \partial B_{r_2})}^2 \\[6pt] \sim   \sum_{n =1}^{N-1} \sum_{|m| \le n} \sum_{j=1}^2 \frac{\xi_n^2}{1 + \xi_{n}^2}  n^3 \big(  |\alpha_{j, n}^{m}|^2 + |\beta_{j, n}^{m}|^2 \big) r_2^{2 n}  
+   \sum_{n=1}^\infty \sum_{|m| \le n} \frac{\xi_n^2}{1 + \xi_{n}^2} n^3 \Big( |\alpha_{1, n}^m|^2 + |\beta_{1, n}^m|^2  \Big) r_2^{2n}, 
\end{multline*}
\begin{multline*}
\| (\cE_\delta , \cH_\delta  \|_{H^{-1/2}(\dive_\Gamma, \partial B_{r_3})}^2 \\[6pt]
\sim  \sum_{n=1}^N \sum_{|m| \le n}  \sum_{j=1}^2 \frac{1}{1 + \xi_{n}^2} n^3 \Big( |\alpha_{j, n}^m|^2 + |\beta_{j, n}^m|^2  \Big) r_3^{2n}  +   \sum_{n=1}^\infty \sum_{|m| \le n} \frac{1}{1 + \xi_{n}^2} n^3 \Big( |\alpha_{1, n}^m|^2 + |\beta_{1, n}^m|^2  \Big) r_3^{2n}. 
\end{multline*}   
Using the trace theory, we derive from \eqref{finite-EH} and \eqref{thm3-p1-1} that 
\begin{equation}\label{thm3-p2}
\| (\cE_\delta - \cE, \cH_\delta - \cH ) \|_{H^{-1/2}(\dive_\Gamma, \partial B_{r_2})}^2 \le  C \delta^{2 ( 1- \alpha )} 
\end{equation}
and from \eqref{finite-EH} and \eqref{thm3-p1} that 
\begin{equation}\label{thm3-p3}
\| (\cE_\delta , \cH_\delta  \|_{H^{-1/2}(\dive_\Gamma, \partial B_{r_3})}^2  \le  C \delta^{- 2 \alpha }. 
\end{equation}

Using the fact that, on $\partial B_{r_2}$,  
$$
(\cH_\delta + \bH) \times \nu  =  (\cH_\delta - \cH) \times \nu + (\cH + \bH) \times \nu  =  (\cH_\delta - \cH) \times \nu 
$$
and applying Lemma~\ref{lem-stability}, we derive from \eqref{EH1} and \eqref{thm3-p2} that 
\begin{equation}\label{thm3-p4}
\| (\cE_{1, \delta}, \cH_{1, \delta}) \|_{L^2(B_R)} \le C_R \delta^{-\alpha}. 
\end{equation}
Applying Corollary~\ref{cor-pro2}, we obtain from \eqref{thm3-p3} that  
\begin{equation}\label{thm3-p5}
\| (\cE_{2, \delta}, \cH_{2, \delta}) \|_{B_R} \le C_R \delta^{-\alpha}. 
\end{equation}

Assertion \eqref{thm3-A1} now follows from \eqref{EH-delta-**}, \eqref{thm3-p4} and \eqref{thm3-p5}. \qed 
 
\subsection{Proof of the second statement of Theorem~\ref{thm3}} 

One of the ingredient of the proof  is the following three-sphere inequality, which is conducted in the spirit of  Hadamard's famous one  \cite{Hadamard}.

\begin{lemma} \label{lem-3spheres-ball}Let $\omega > 0$, $0< R_1 < R_2 < R_3 \le R$, and let $(J_e, J_m) \in [L^2(B_R)]^6$. Assume that $(E, H) \in [H(\curl, B_R)]^2$ is a solution of 
\begin{equation*}
\left\{\begin{array}{cl}
\nabla \times E = i \omega H + J_e \mbox{ in } B_{R} , \\[6pt]
\nabla \times H = - i \omega H + J_m  \mbox{ in } B_{R}. 
\end{array}\right.
\end{equation*}
We have, with $\alpha = \ln (R_1/R_3)  \Big/ \ln (R_2 / R_3)$,  
\begin{equation*}
\| (E, H) \|_{L^2(B_{R_2})} \le C \Big(\| (E, H) \|_{L^2(B_{R_1})} + \| (J_e, J_m) \|_{L^2(B_R)}\Big)^\alpha \Big( \| (E, H) \|_{L^2(B_{R_3} )}  + \|(J_e, J_m) \|_{L^2(B_{R})}\Big)^{1 - \alpha}  
\end{equation*} 
for some positive constant $C$ that depends on $\omega$ and $R$ but is independent of $R_1$, $R_2$, $R_3$, $(J_e, J_m)$,  and $(E, H)$. 
\end{lemma}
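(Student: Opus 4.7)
The plan is to decouple the Maxwell system into scalar Helmholtz equations and then invoke a classical Hadamard-type three-sphere inequality on concentric balls. Since the coefficients are constant ($\eps = \mu = I$ here), the standard reduction to second-order equations applies cleanly, and on balls the explicit Hadamard exponent comes for free.

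First I would upgrade the regularity: taking the divergence of the two equations yields $\dive E = \dive J_m/(i\omega)$ and $\dive H = -\dive J_e/(i\omega)$ in the sense of distributions, which together with $(E,H) \in [H(\curl, B_R)]^2$ gives $(E,H) \in [H^1_{\loc}(B_R)]^6$ with norm controlled by $\|(E,H)\|_{H(\curl, B_R)} + \|(J_e,J_m)\|_{L^2(B_R)}$. Taking the curl of the first equation, substituting the second, and using $\nabla \times \nabla \times u = \nabla(\dive u) - \Delta u$ shows that each Cartesian component $E_a$ (and similarly $H_a$) satisfies
\[
\Delta E_a + \omega^2 E_a = F_a \quad \text{in } B_R,
\]
with $\|F_a\|_{H^{-1}(B_R)} \le C \|(J_e,J_m)\|_{L^2(B_R)}$. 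Equivalently, one can specialize \eqref{eq-H}--\eqref{eq-E} to $\eps = \mu = I$.

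Next, to each of the six scalar components I would apply the classical three-sphere inequality for the Helmholtz operator $\Delta + \omega^2$ on concentric balls with the explicit Hadamard exponent $\alpha$ stated. This follows either from a Carleman estimate with logarithmic radial weight in the spirit of \cite{ARRV}, or, more concretely in the present geometry, from an expansion in vector spherical harmonics: writing each component as $\sum \bigl(a_n^m j_n(\omega r) + b_n^m y_n(\omega r)\bigr) Y_n^m$ and using the asymptotics \eqref{jy-n} reduces the problem mode by mode to the Hadamard inequality for the harmonic pair $r^n$, $r^{-n-1}$, which is exactly the convexity of $\log M(r)$ in $\log r$. Summing the six scalar inequalities and absorbing the source-dependent pieces into $\|(J_e,J_m)\|_{L^2(B_R)}$ (which is monotone in the ball radius and so harmlessly added to both factors on the right) yields the claim with a constant depending only on $\omega$ and $R$.

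The main obstacle I anticipate is ensuring that the zeroth-order term $\omega^2 u$ and the nonzero inhomogeneity do not spoil the explicit exponent. The cleanest way around this is to use the Carleman weight $\varphi(x) = \log|x|$, for which the lower-order terms are absorbed into the dominant part of the estimate at the price of a constant depending on $\omega R$; alternatively, the mode-by-mode reduction sketched above combined with Duhamel's formula on each one-dimensional radial ODE handles the inhomogeneity directly, since the fundamental solutions are exactly $\hat j_n(\omega r)$ and $\hat y_n(\omega r)$ from \eqref{def-jn}.
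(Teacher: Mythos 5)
Your proposal is correct and follows essentially the same route as the paper: subtract a particular solution to reduce to the homogeneous system, expand in spherical harmonics on concentric balls, invoke the asymptotics \eqref{jy-n}, and conclude by H\"older's inequality in the radius. The only real differences are that you first decouple into six scalar Helmholtz equations and expand componentwise, whereas the paper keeps the vector structure and uses the vector spherical harmonic expansion of Maxwell solutions on the ball directly, and that the paper handles the inhomogeneity by subtracting the solution of an impedance boundary-value problem on $\partial B_R$ rather than by Duhamel or a Carleman estimate. One point you should make explicit: since the fields are in $L^2$ of the full ball, the $\hat y_n$ coefficients in your expansion must vanish, and it is precisely this that turns the mode-by-mode step into a clean H\"older inequality for the single power $r^{2n}$; with both $r^n$ and $r^{-n-1}$ present, the cross terms would force you into the genuinely harder logarithmic-convexity argument, and the explicit exponent $\alpha$ would not drop out so directly.
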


\begin{proof} Let $(\bE, \bH) \in [H(\curl, B_R)]^2$ be the unique solution of 
\begin{equation*}
\left\{\begin{array}{cl}
\nabla \times \bE = i \omega \bH + J_e &  \mbox{ in } B_{R} , \\[6pt]
\nabla \times \bH = - i \omega \bE + J_m  & \mbox{ in } B_{R}, \\[6pt]
(\bE \times \nu) \times \nu +  \bH \times \nu = 0 & \mbox{ on } \partial B_R. 
\end{array}\right.
\end{equation*}
We have  
\begin{equation} \label{lem-ball-1}
\| (\bE. \bH) \|_{L^2(B_R)} \le C \| (J_E, J_H)\|_{L^2(B_R)}.
\end{equation}
In this proof, $C$ denotes a positive constant depending only on $\omega$ and $R$. By considering $(E - \bE, H- \bH)$ and using \eqref{lem-ball-1}, without loss of generality, one might assume that $(J_e, J_m) = (0, 0)$ in $B_R$. 
By a standard change of variables and the use of  Lemma~\ref{lem-TO}, one might also  assume that $\omega =1$ as well. Since $\omega =1$ and $(J_e, J_m) = (0, 0)$ in $B_R$, one has, see,  e.g. \cite[Theorem 2.48]{Kirsch},  
\begin{equation*}
E(x) =      \sum_{n=1}^\infty \sum_{|m| \le n} \left( \sqrt{n (n +1)}  \frac{\alpha_{n}^m \hj_{n}(r)}{r} Y_n^m (\hat x) \hat x   + \frac{ \alpha_{n}^m   \big(r\hj_n(r) \big)'}{r} U_n^m (\hat x)  +  \beta_{n}^m \hj_n(r) V_n^m(\hat x) \right)
\end{equation*}
and 
\begin{equation*}
H(x) =    i  \sum_{n=1}^\infty \sum_{|m| \le n} \left( \sqrt{n (n +1)}  \frac{\beta_{n}^m \hj_{n}(r)}{r} Y_n^m(\hat x) \hat x   +  \frac{\beta_n^m \big(r \hj_n(r) \big)'}{r} U_n^m (\hat x)  +  \alpha_{n}^m \hj_n(r) V_n^m(\hat x) \right). 
\end{equation*}
Using \eqref{jy-n}, as in the proof of Assertion~(1) of Theorem~\ref{thm3},  one obtains 
\begin{equation*}
 \|(E, H) \|_{L^2(B_{r})}^2  \sim   \sum_{n=1}^\infty \sum_{|m| \le n} n^3 \big( |\alpha_n^m|^2 + |\beta_n^m|^2 \big) r^{2n}. 
\end{equation*}
The conclusion now follows from H\"older's inequality. 
\end{proof}

We are ready to give 

\begin{proof}[Proof of Assertion~(2) of Theorem~\ref{thm3}] Set $\delta_n = 2^{-n}$. 
We establish 
\begin{equation*}
\sup_n \delta_n^{\alpha} \| (E_{\delta_n}, H_{\delta_n})\|_{L^2(\Omega_2 \setminus \Omega_1)}  =  + \infty 
\end{equation*} 
by contradiction.  Assume that 
\begin{equation}\label{claim-IE}
\sup_n \delta_n^{\alpha} \| (E_{\delta_n}, H_{\delta_n})\|_{L^2(\Omega_2 \setminus \Omega_1)}  <  + \infty.
\end{equation} 
As in the proof of Lemma~\ref{lem-stability}, we derive from \eqref{claim-IE} that 
\begin{equation*}
\sup_{n} \delta_n^{\alpha} \| (E_{\delta_n}, H_{\delta_n})\|_{L^2(B_R)}  <  + \infty. 
\end{equation*} 
This implies,  with $(E_{1, \delta}, H_{1, \delta})  = (\cF*E_\delta, \cF*H_\delta)$, that 
\begin{equation}\label{claim-IE-contradiction}
 \sup_{n} \delta_n^{\alpha} \big( \| (E_{n}, H_n)\|_{L^2(B_{r_3} \setminus B_{r_2})}   + \| (E_{1, n}, H_{1, n}) \|_{L^2(B_{r_3} \setminus B_{r_2})}  \big) <  + \infty,  
\end{equation} 
where we denote $(E_{\delta_n}, H_{\delta_n})$ and $(E_{1, \delta_n}, H_{1, \delta_n})$
 by $(E_n, H_n) $  and $(E_{1, n}, H_{1, n})$, respectively, for notational ease.

Define 
\begin{equation*}
(\cE_n, \cH_n) = \left\{ \begin{array}{cl}
 (E_n, H_n) - (E_{1, n}, H_{1, n})  &   \mbox{ in } B_{r_3} \setminus B_{r_2}, \\[6pt]
 (0, 0) & \mbox{ in } B_{r_2}. 
 \end{array}\right. 
\end{equation*}
Then
\begin{equation*}
\left\{\begin{array}{cl}
\nabla \times \cE_{n} = i \omega  \cH_{n} -  \delta_n  \omega \cF_*I H_{1, n} \mathds{1}_{B_{r_3} \setminus B_{r_2}}  &  \mbox{ in } B_{r_3},  \\[6pt]
\nabla \times \cH_{n} = - i \omega  \cE_{n} +  \delta_n \omega \cF_*I E_{1, n}  \mathds{1}_{B_{r_3} \setminus B_{r_2}}  & \mbox{ in } B_{r_3}. 
\end{array}\right.
\end{equation*}
Set, in $B_{r_3}$,  
$$
f_n  = \Big( -   2^{-(n+1)} \omega \cF_*I H_{1, 2^{-(n+1)}}   \mathds{1}_{B_{r_3} \setminus B_{r_2}} +  2^{-n} \omega  \cF_*I H_{1, 2^{-n}} \Big) \mathds{1}_{B_{r_3} \setminus B_{r_2}}
$$ 
and 
$$
g_n =   \Big( 2^{-(n+1)}  \omega \cF_*I E_{1, 2^{-(n+1)}}   \mathds{1}_{B_{r_3} \setminus B_{r_2}} -  2^{-n} \omega  \cF_*I E_{1, 2^{-n}}  \mathds{1}_{B_{r_3} \setminus B_{r_2}} \Big) \mathds{1}_{B_{r_3} \setminus B_{r_2}}.
$$ 
Let $(\bE_n, \bH_n) \in L^2(B_{R_0})$ be such that 
\begin{equation*}
\left\{\begin{array}{cl}
\nabla \times \bE_{n} = i \omega  \bH_{n}  + f_n &  \mbox{ in } B_{r_3},  \\[6pt]
\nabla \times \bH_{n} = - i \omega  \bE_{n}  + g_n & \mbox{ in } B_{r_3}, \\[6pt]
\bE_n \times \nu \times \nu +  \bH_n \times \nu = 0 \mbox{ on } \partial B_{r_3}. 
\end{array}\right.
\end{equation*}
From \eqref{claim-IE-contradiction},  we have 
\begin{equation}\label{claim-IE-contradiction-2}
\| (\bE_n, \bH_n)\|_{L^2(B_{r_3})} \le C 2^{-n(1- \alpha)}.  
\end{equation}
Here and in what follows in this proof, $C$ denotes a positive constant independent of $n$. 
Set 
$$
(\hE_n, \hH_n) =  (\cE_{n+1} - \cE_n, \cH_{n+1} - \cH_n)  -    (\bE_n,  \bH_n)  \mbox{ in } B_{r_3}. 
$$
It follows that 
\begin{equation*}
\left\{\begin{array}{cl}
\nabla \times \hE_{n} = i \omega  \hH_{n}  &  \mbox{ in } B_{r_3},  \\[6pt]
\nabla \times \hH_{n} = - i \omega  \hE_{n}  & \mbox{ in } B_{r_3}. 
\end{array}\right.
\end{equation*}
Applying Lemma~\ref{lem-3spheres-ball}, we have 
\begin{equation*}
\|(\hE_n, \hH_n) \|_{L^2(B_{\hat r_0})} \le  C \|(\hE_n, \hH_n) \|_{L^2(B_{r_2})} ^{\beta}  \|(\hE_n, \hH_n) \|_{L^2(B_{r_3})}^{1 - \beta}. 
\end{equation*}
with $r_0 < \hat r_0 < r_2^\alpha r_3 ^{1 - \alpha}$ and  $\beta = \ln (\hat r_0/ r_3)  \big/ \ln (r_2/ r_3) >  \alpha$. 
Using  \eqref{claim-IE-contradiction} and \eqref{claim-IE-contradiction-2}, we obtain 
\begin{equation*}
\|(\hE_n, \hH_n) \|_{L^2(B_{\hat r_0})} \le C 2^{-n [\beta (1 - \alpha) - \alpha  (1  -\beta)  ]} \le C 2^{n (\alpha - \beta)}.  
\end{equation*}
One derives that 
\begin{equation*}
(\hE_n, \hH_n) \mbox{ is a Cauchy sequence in } L^2(B_{\hat r_0}). 
\end{equation*}
Considering the system of the limit of $(\hE_n, \hH_n)$, one reaches a contradiction with a non-existence of a solution of \eqref{thm3-sys1}.  
\end{proof}

\end{document}